\newtheorem{theorem}{Theorem}
\newtheorem{lemma}[theorem]{Lemma}
\newtheorem{corollary}[theorem]{Corollary}
\newtheorem{proposition}[theorem]{Proposition}
\newtheorem{definition}[theorem]{Definition}
\newtheorem{example}[theorem]{Example}
\newtheorem{remark}[theorem]{Remark}
\newenvironment{proof}{\noindent{\em Proof:}}{$\Box$~\\}
\DeclareMathOperator{\im}{im}
\DeclareMathOperator{\LSat}{LSat}
\DeclareMathOperator{\lc}{lc}
\DeclareMathOperator{\lm}{lm}
\DeclareMathOperator{\lexp}{le}
\DeclareMathOperator{\NF}{NF}
\DeclareMathOperator{\LeftNF}{LeftNF}
\DeclareMathOperator{\Ann}{Ann}
\DeclareMathOperator{\Mon}{Mon}
\DeclarePairedDelimiter{\erz}{\langle}{\rangle}
\DeclarePairedDelimiter{\merz}{[}{]}
\DeclarePairedDelimiter{\set}{\{}{\}}
\newcommand{\IN}{\mathbb{N}}
\newcommand{\tensor}{\otimes}
\newcommand{\inv}{^{-1}}
\newcommand{\nset}[2][1]{\set{#1,\ldots,#2}}
\newcommand{\leftideal}[2]{\leftidx{_{#1}}{\erz{#2}}{}}
\newcommand{\ideal}[1]{\erz{#1}}
\newcommand{\central}[1]{Z(#1)}
\newcommand{\mf}[1]{\mathfrak{#1}}
\newcommand{\compset}[1]{\underline{#1}}
\newcommand{\ModExp}[2]{\compset{#1}\times\IN_0^{#2}}
\newcommand{\POTify}[1]{\operatorname{{#1}^{\operatorname{POT}}}}
\newcommand{\myemph}[1]{\textit{\textbf{#1}}}
\renewcommand{\emph}{\myemph}
\begin{document}

\begin{frontmatter}

\title{Constructive Arithmetics in Ore Localizations Enjoying Enough Commutativity}

\author{Johannes Hoffmann}
\address{Research group in Free Probability, Saarland University}
\ead{Johannes.Hoffmann@math.uni-sb.de}
\ead[url]{https://www.math.uni-sb.de/ag/speicher/hoffmannE.html}

\author{Viktor Levandovskyy}
\address{Lehrstuhl D f\"ur Mathematik, RWTH Aachen University}
\ead{Viktor.Levandovskyy@math.rwth-aachen.de}
\ead[url]{http://www.math.rwth-aachen.de/~Viktor.Levandovskyy/}

\begin{abstract}
	This paper continues a research program on constructive investigations of non-commutative Ore localizations, initiated in our previous papers, and particularly touches the constructiveness of arithmetics within such localizations.
	Earlier we have introduced monoidal, geometric and rational types of localizations of domains as objects of our studies. 
	Here we extend this classification to rings with zero divisors and consider Ore sets of the mentioned types which are commutative enough: such a set either belongs to a commutative algebra or it is central or its elements commute pairwise.
	By using the systematic approach we have developed before, we prove that arithmetic within the localization of a commutative polynomial algebra is constructive and give the necessary algorithms.
	We also address the important question of computing the local closure of ideals which is also known as the desingularization, and present an algorithm for the computation of the symbolic power of a given ideal in a commutative ring.
	We also provide algorithms to compute local closures for certain non-commutative rings with respect to Ore sets with enough commutativity.
\end{abstract}

\begin{keyword}
	Ore localization; Noncommutative algebra; Algorithms
\end{keyword}
\end{frontmatter}

\section*{Introduction}

The algebraic technique of commutative localization has found applications across many areas of mathematics and beyond; it is instrumental everywhere from algebraic geometry to system and control theory. Among several possible generalizations to the non-commutative setting, Ore localization stands out as being approachable in a constructive manner by methods of modern computer algebra. This paper is a part of our broad program dedicated to realizing this approach. Starting point was the investigation of arithmetic operations with left and right fractions in Ore localizations of non-commutative domains in \cite{HL_ISSAC17} and its extended version \cite{HL17ext}. We have demonstrated that such arithmetic operations are based essentially on two algorithms, namely 
\begin{itemize}
	\item the computation of the kernel of a module homomorphism and 
	\item the computation of the intersection of a left ideal with a monoid.
\end{itemize}
Especially the latter algorithm is hardly constructive in a broad generality, therefore we have introduced a partial classification of types of multiplicative monoids for which the intersection problem can be solved algorithmically.
We recall an extended version of the classification in \Cref{classification_of_localization_types}.

In this paper we revisit the case of commutative polynomial algebras both on their own and as homomorphic images in a noncommutative ring as either central subalgebras or 
those which are generated by pairwise commuting elements.
On the one hand we extend our framework to such algebras with zero divisors.
On the other hand we also consider the important problem of the computation of the local closure of a submodule with respect to a given denominator set (also known as the desingularization), which is tightly connected with the generalized torsion submodule of a module.

Though some of the algorithms have been known in commutative algebra, they are scattered in the existing literature and are often deprived of proofs.
We describe the problems in a systematic and self-contained way.
In the collection of the algorithms we present, \ref{alg_NCIdealIntersectionWithMonoid}, \ref{alg:decomposition_closure}, \ref{alg:symbolic_power}, \ref{alg:CentralEssentialRationalClosure}, \ref{alg_CentralWeylClosure}, and \ref{alg_AnnFsViaWeylClosure}
are new.
The following list summarizes the problems discussed in this paper with references to the corresponding algorithms:

\begin{description}[leftmargin=0cm]
	\item[Polynomial algebras:]
		In a polynomial algebra $R=K[x]/J$, where $J$ is an ideal in the commutative polynomial ring $K[x]:=K[x_1,\ldots,x_n]$, we can compute the intersection of an ideal $I$ in $R$ with a multiplicative subset $S$ of $R$, if
		\begin{itemize}
			\item
				$S\inv R$ is monoidal and $S$ is finitely generated (\Cref{alg_NCIdealIntersectionWithMonoid}),
			\item
				$S\inv R$ is geometric (\Cref{alg:geometric_intersection}), or
			\item
				$S\inv R$ is essential rational (\Cref{alg:rational_intersection}).
		\end{itemize}
		Furthermore, we can decide whether a multiplicative submonoid of $R$ contains $0$  (\Cref{alg_ZeroContainedInMonoid}). It is important, since localizing $R$ at a submonoid $S$ containing $0$ yields the trivial localization $S\inv R=\set{0}$.
	\item[Commutative rings:]
		In an arbitrary commutative ring $R$ we can compute the closure of an ideal $I$ with respect to a multiplicative set $S$ via \Cref{alg:decomposition_closure} under the following conditions:
		\begin{enumerate}[(1)]
			\item
				The ideal $I$ is decomposable into primary ideals and such a decomposition is either known or computable.
			\item
				We can decide whether $Q\cap S=\emptyset$ for any primary ideal $Q$ in $R$.
		\end{enumerate}
	\item[] In particular, we give \Cref{alg:symbolic_power} for computing the symbolic power of a given ideal.	
	\item[G-algebras:]
		In a $G$-algebra we can compute the closure of an ideal $I$ with a left Ore set $S$, if
		\begin{itemize}
			\item
				$S\inv R$ is monoidal and $S$ is generated as a monoid by finitely many elements $f_1,\ldots,f_k$ that commute pairwise and $\central{A}\cap S$ contains a multiple of $f_1\cdot\ldots\cdot f_k$ (\Cref{monoidal_closure_in_G-algebras}), or
			\item
				$S\inv R$ is central essential rational (\Cref{alg:CentralEssentialRationalClosure}).
		\end{itemize}
\end{description}

In comparison to the ISSAC version of this paper (\cite{HL_ISSAC18}), the material has been expanded and slightly reworked (some proofs now contain more details).
In particular, we have expanded \Cref{sectAppLocalClosureComm} with criteria for emptiness of the intersection of primary ideals and multiplicative sets as well as with \Cref{alg:symbolic_power} (computation of the symbolic power of an ideal), 
described Weyl closure algorithms in \Cref{remWeylClosureAlgorithms}, added \Cref{subsectCentralWeylclosure} on the details of central Weyl closure including \Cref{alg_CentralWeylClosure}, and finally added \Cref{AnnfsViaCentralWeylclosure} with the new \Cref{alg_AnnFsViaWeylClosure} to compute the annihilator ideal of the important special function $f^s$.

\section{The basics of (Ore) localization}

All rings are assumed to be associative and unital, but not necessarily commutative.

\begin{definition}
	A subset $S$ of a ring $R$ is called
	\begin{itemize}
		\item
			a \emph{multiplicative set} if $1\in S$, $0\notin S$ and for all $s,t\in S$ we have $s\cdot t\in S$.
		\item
			a \emph{left Ore set} if it is a multiplicative set that satisfies the \emph{left Ore condition}: for all $s\in S$ and $r\in R$ there exist $\tilde{s}\in S$ and $\tilde{r}\in R$ such that $\tilde{s}r=\tilde{r}s$.
		\item
			a \emph{left denominator set} if it is a left Ore set that is additionally \emph{left reversible}: for all $s\in S$ and $r\in R$ such that $rs=0$ there exists $\tilde{s}\in S$ satisfying $\tilde{s}r=0$.
	\end{itemize}
\end{definition}

For any subset $B$ of $R\setminus\set{0}$ we can consider the set $\merz{B}$ consisting of all finite products of elements of $B$, where the empty product represents $1$.
If $R$ is a domain then $\merz{B}$ is always a multiplicative set called the \emph{multiplicative closure} of $B$.

The main goal of localization can be seen from the following axiomatic definition:

\begin{definition}\label{axiomatic_definition_of_Ore_localization}
	Let $S$ be a multiplicative subset of a ring $R$.
	A ring $R_S$ together with a homomorphism $\varphi:R\rightarrow R_S$ is called a \emph{left Ore localization} of $R$ at $S$ if:
	\begin{enumerate}[(1)]
		\item
			For all $s\in S$, the element $\varphi(s)$ is a unit in $R_S$.
		\item
			For all $x\in R_S$, there exist $s\in S$ and $r\in R$ such that $x=\varphi(s)\inv\varphi(r)$.
		\item
			We have $\ker(\varphi)=\set{r\in R\mid\exists~s\in S:sr=0}$.
	\end{enumerate}
\end{definition}

One can show that a left Ore localization of $R$ at $S$ exists if and only if $S$ is a left denominator set.
In this case the localization is unique up to isomorphism.
The classical construction is given by the following:

\begin{theorem}\label{construction_of_the_Ore_localization}
	Let $S$ be a left denominator set in a ring $R$.
	The relation $\sim$ on $S\times R$, given by
	\[
		(s_1,r_1)\sim(s_2,r_2)\Leftrightarrow\exists~\tilde{s}\in S\exists~\tilde{r}\in R:\tilde{s}s_2=\tilde{r}s_1\text{ and }\tilde{s}r_2=\tilde{r}r_1,
	\]
	is an equivalence relation.
	Now $S\inv R:=((S\times R)/\sim,+,\cdot)$ becomes a ring via
	\[
		(s_1,r_1)+(s_2,r_2)
		:=(\tilde{s}s_1,\tilde{s}r_1+\tilde{r}r_2),
	\]
	where $\tilde{s}\in S$ and $\tilde{r}\in R$ satisfy $\tilde{s}s_1=\tilde{r}s_2$, and
	\[
		(s_1,r_1)\cdot(s_2,r_2)
		:=(\tilde{s}s_1,\tilde{r}r_2),
	\]
	where $\tilde{s}\in S$ and $\tilde{r}\in R$ satisfy $\tilde{s}r_1=\tilde{r}s_2$.
	Together with the \emph{localization map} or \emph{structural homomorphism}
	\[
		\rho_{S,R}:R\rightarrow S\inv R,\quad
		r\mapsto(1,r),
	\]
	the pair $(S\inv R,\rho_{S,R})$ is the left Ore localization of $R$ at $S$.
\end{theorem}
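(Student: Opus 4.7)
The plan is to verify, in turn, (i) that $\sim$ is an equivalence relation, (ii) that the proposed sum and product are independent of every auxiliary choice, (iii) that $(S\inv R,+,\cdot)$ satisfies the ring axioms and that $\rho_{S,R}$ is a ring homomorphism, and (iv) that the three defining properties of \Cref{axiomatic_definition_of_Ore_localization} hold.

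For the equivalence relation, reflexivity is witnessed by $\tilde{s}=\tilde{r}=1$. For transitivity, given witnesses $(\tilde{s}_1,\tilde{r}_1)$ and $(\tilde{s}_2,\tilde{r}_2)$ of $(s_1,r_1)\sim(s_2,r_2)$ and $(s_2,r_2)\sim(s_3,r_3)$, I would apply the left Ore condition to $\tilde{s}_1\in S$ and $\tilde{r}_2\in R$ to produce $u\in S$ and $v\in R$ with $u\tilde{r}_2=v\tilde{s}_1$; a direct computation then shows that $(u\tilde{s}_2,v\tilde{r}_1)\in S\times R$ witnesses $(s_1,r_1)\sim(s_3,r_3)$. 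Symmetry is the most delicate point: starting from $\tilde{s}s_2=\tilde{r}s_1$ and $\tilde{s}r_2=\tilde{r}r_1$, I would apply the Ore condition to $\tilde{s}s_2\in S$ and $s_1\in R$ to obtain $u\in S$ and $v\in R$ with $us_1=v\tilde{s}s_2=v\tilde{r}s_1$, so that $(u-v\tilde{r})s_1=0$. Since $s_1\in S$, the left reversibility of $S$ produces $\hat{s}'\in S$ with $\hat{s}'u=\hat{s}'v\tilde{r}$, and then $\hat{s}:=\hat{s}'u\in S$ together with $\hat{r}:=\hat{s}'v\tilde{s}\in R$ witness $(s_2,r_2)\sim(s_1,r_1)$. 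This is precisely the step where the denominator hypothesis is essential, not merely the Ore condition.

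Well-definedness of the addition and multiplication formulas would be checked in two stages: first, that each formula produces the same equivalence class regardless of which permissible choice of auxiliary $\tilde{s},\tilde{r}$ is made; second, that the result is unchanged upon replacing $(s_i,r_i)$ by an equivalent pair. Both reductions again amount to producing common left multipliers by iterated application of the Ore condition, and absorbing right annihilators by left reversibility. Once these two steps are in place, the ring axioms (associativity, commutativity of $+$, distributivity, and the identities of $(1,0)$ as zero and $(1,1)$ as unit) reduce to routine manipulations, since after passing to a sufficiently common left denominator all the relevant identities can be verified inside $R$ itself.

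For the axiomatic properties of the localization, (1) the inverse of $\rho_{S,R}(s)=(1,s)$ is $(s,1)$, computed directly from the product formula with $\tilde{s}=\tilde{r}=1$; (2) every class admits the factorization $(s,r)=(s,1)\cdot(1,r)=\rho_{S,R}(s)\inv\rho_{S,R}(r)$ by the same formula; and (3) the condition $r\in\ker(\rho_{S,R})$ unravels to $(1,r)\sim(1,0)$, which is equivalent to the existence of some $\tilde{s}\in S$ with $\tilde{s}r=0$. The main obstacle throughout is the symmetry of $\sim$ and the well-definedness of the two operations: in both cases the key move is to produce the correct element of $S$ that absorbs the ambiguity, and the denominator hypothesis is exactly what permits this beyond what the plain Ore condition delivers.
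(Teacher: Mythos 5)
The paper states this theorem as a classical construction and gives no proof of its own, so there is no argument of record to compare against; I can only assess your proposal on its merits. Your outline is sound. Reflexivity and transitivity use only the left Ore condition, and your transitivity witness $(u\tilde{s}_2,v\tilde{r}_1)$ does check out via the chain $u\tilde{s}_2 s_3 = u\tilde{r}_2 s_2 = v\tilde{s}_1 s_2 = v\tilde{r}_1 s_1$ (and the analogous one for the $r$'s). Your symmetry argument correctly isolates where left reversibility is needed: the Ore condition alone only gives $(u-v\tilde{r})s_1=0$, and one must then kill $u-v\tilde{r}$ from the left by some $\hat{s}'\in S$ before the pair $(\hat{s}'u,\hat{s}'v\tilde{s})$ can serve as the witness for $(s_2,r_2)\sim(s_1,r_1)$; this is exactly why $S$ must be a denominator set and not merely an Ore set, and is the point a careless reader most often elides. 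The verification of the three axioms of \Cref{axiomatic_definition_of_Ore_localization} is also correct: $(s,1)$ is indeed a two-sided inverse of $(1,s)$ (with $(s,1)\cdot(1,s)=(s,s)\sim(1,1)$ checked directly), and the kernel computation unwinds as you describe. The well-definedness of $+$ and $\cdot$ under change of auxiliary pair and change of representatives, plus the ring axioms themselves, are only sketched; that is the bulk of the routine work in a full proof, but you identify the right mechanism (pass to a common left denominator via Ore, absorb right annihilators via reversibility), so the sketch is faithful. The only stylistic gap is that a complete proof would also explicitly note that $\rho_{S,R}$ is a ring homomorphism, which falls out of the addition and product formulas with $s_1=s_2=1$.
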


The elements of $S\inv R$ are called \emph{left fractions} and are denoted again as tuples $(s,r)$ which are identified with their equivalence class modulo $\sim$.
The localizations that appear the most in applications are those with denominator sets of the following three types:

\begin{definition}\label{classification_of_localization_types}
	Let $K$ be a field, $R$ a $K$-algebra and $S$ a left denominator set in $R$.
	Then $S$ (and by extension, the localization $S\inv R$) might belong to one of the following non-exclusive types:
	\begin{description}
		\item[\emph{Monoidal}]
			$S$ is generated as a multiplicative monoid by at most countably many elements.
		\item[\emph{Geometric}]
			$S=(K[x]/J)\setminus\mf{p}$ for some prime ideal $\mf{p}$ in the polynomial algebra $K[x]/J\subseteq R$, where $J$ is an ideal in $K[x]:=K[x_1,\ldots,x_n]$.
		\item[\emph{Rational}]
			$S=T\setminus\set{0}$ for some $K$-subalgebra $T$ of $R$.\\
			\textbf{Special case:}
			If $R$ is generated over $K$ by a set of variables $x=\set{x_1,\ldots,x_n}$ and $T$ is generated by a subset of $x$ we call $S$ an \emph{essential} rational left denominator set.
	\end{description}
\end{definition}

\begin{definition}
	Let $S$ be a left denominator set in a ring $R$ and $M$ a left $R$-module.
	Then the \emph{left Ore localization} of $M$ at $S$ is defined as $S\inv M:=S\inv R\tensor_RM$.
\end{definition}

\begin{lemma}[\normalfont e.g. \cite{skoda_2006}, 7.3]\label{elementary_tensors}
	Let $S$ be a left denominator set in a ring $R$ and $M$ a left $R$-module.
	Any element of $S\inv M$ can be written in the form $(s,1)\tensor m$ for some $s\in S$ and $m\in M$.
\end{lemma}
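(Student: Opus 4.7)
The plan is to take an arbitrary finite sum representation $x = \sum_{i=1}^k (s_i, r_i) \tensor m_i$ of an element of $S\inv M = S\inv R \tensor_R M$ and successively simplify it until it collapses to a single elementary tensor of the required shape $(s,1)\tensor m$. The strategy proceeds in three steps: first normalize the left tensor factors so that each has numerator $1$, then bring all denominators to a common left multiple, and finally pull that common denominator out of the sum.

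The first step rests on the identity $(s_i, r_i) = (s_i, 1) \cdot (1, r_i)$ in $S\inv R$, which follows from the multiplication rule of \Cref{construction_of_the_Ore_localization} with the trivial witnesses $\tilde{s} = \tilde{r} = 1$. Since $(1, r_i) = \rho_{S,R}(r_i)$ lies in the image of the structural homomorphism, it moves freely across the tensor product taken over $R$, giving $(s_i, r_i)\tensor m_i = (s_i,1)\tensor r_i m_i$. Setting $n_i := r_i m_i$, the sum reduces to $x = \sum_i (s_i, 1)\tensor n_i$.

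The second step is to construct $s \in S$ and $t_1, \ldots, t_k \in R$ with $s = t_i s_i$ for each $i$. For $k = 2$, the left Ore condition applied to $s_1 \in S$ and $s_2 \in R$ yields $\tilde{s}\in S$ and $\tilde{r}\in R$ with $\tilde{s}s_2 = \tilde{r}s_1$, so $s := \tilde{s}s_2 \in S$ together with $t_1 := \tilde{r}$ and $t_2 := \tilde{s}$ does the job; a short induction handles arbitrary finite $k$. Given this data, the equivalence relation of \Cref{construction_of_the_Ore_localization}, checked with witnesses $\tilde{s} = 1$ and $\tilde{r} = t_i$, shows $(s_i, 1) = (s, t_i)$ in $S\inv R$. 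Factoring $(s, t_i) = (s,1)\cdot (1, t_i)$ exactly as in the first step gives $(s_i, 1)\tensor n_i = (s,1)\tensor t_i n_i$. Summing and pulling the common factor outside the tensor product yields $x = (s,1)\tensor m$ with $m := \sum_i t_i n_i \in M$.

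I do not anticipate any genuine obstacle. Steps one and three reduce to routine manipulations with the explicit multiplication and equivalence relation in $S\inv R$, while step two depends only on the iterated application of the left Ore condition, which is standard. The one conceptual point worth flagging in the write-up is that after the $S$-elements have been collected on the left, the tensor product over $R$ absorbs the remaining $R$-coefficients, which is precisely the mechanism that makes the common-denominator reduction possible.
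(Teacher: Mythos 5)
Your proof is correct. The paper itself gives no proof of this lemma—it simply cites \cite{skoda_2006}, 7.3—so there is nothing to compare against. Your argument is the standard one: normalize each $(s_i,r_i)\tensor m_i$ to $(s_i,1)\tensor r_im_i$ by peeling off $\rho(r_i)$ and pushing it across the tensor product over $R$; obtain a common left multiple $s=t_is_i\in S$ by iterating the left Ore condition; verify $(s_i,1)=(s,t_i)$ via the explicit equivalence relation with witnesses $\tilde s=1$, $\tilde r=t_i$; and collect the common factor $(s,1)$ outside the sum. Each of these steps checks out against the multiplication rule and equivalence relation in \Cref{construction_of_the_Ore_localization}, and the common-left-multiple fact you invoke is the same one the paper itself uses without comment in the proof of \Cref{compatibility_of_closure_with_intersections}(b).
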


\Cref{elementary_tensors} allows us to write $(s,m)$ for an element in $S\inv M$ in analogy to the notation for elements of $S\inv R$.

Alternatively, one can define localization of modules similar to the axiomatic approach in \Cref{axiomatic_definition_of_Ore_localization}, prove its uniqueness and give an elementary construction like in \Cref{construction_of_the_Ore_localization}.

\begin{definition}
	Let $S$ be a left denominator set in a ring $R$ and $M$ a left $R$-module.
	\begin{itemize}
		\item
			The \emph{localization map} of $M$ with respect to $S$ is the homomorphism of left $R$-modules
			\[
				\varepsilon:=\varepsilon_{S,R,M}:M\rightarrow S\inv M,\quad
				m\mapsto(1,m),
			\]
			with kernel $\set{m\in M\mid\exists~s\in S:sm=0}$.
		\item
			Let $P$ be a left $R$-submodule of $M$.
			The \emph{$S$-closure} or \emph{local closure} of $P$ in $M$ with respect to $S$ is defined as $P^S:=\varepsilon_{S,R,M}\inv(S\inv P)$.
	\end{itemize}
\end{definition}

Let $S$ be a left Ore set in a domain $R$.
In our paper \cite{HL_ISSAC17} we introduced the notion of left saturation closure of $S$, given by
\[
	\LSat(S)
	:=\set{r\in R\mid\exists~w\in R:wr\in S}.
\]
We proved that $\LSat(S)$ is a \emph{saturated} left Ore set in $R$ (i.e. for all $s,t\in R$ such that $s\cdot t\in \LSat(S)$ we have $s,t\in \LSat(S)$) and that $S\inv R$ and $\LSat(S)\inv R$ are isomorphic rings via $(s,r)\mapsto(s,r)$, which shows that $\LSat(S)$ is a canonical form of $S$ with respect to the corresponding localization.

To describe the $S$-closure more directly we introduce a notion of left saturation closure similar to the one for left Ore sets:

\begin{definition}\label{def_LSat}
	Let $S$ be a left denominator set in a ring $R$, $M$ a left $R$-module and $P$ a left $R$-submodule of $M$.
	The \emph{left saturation closure} of $P$ in $M$ with respect to $S$ is
	\[
		\LSat_S^M(P)
		:=\set{m\in M\mid\exists~s\in S:sm\in P}.
	\]
\end{definition}

Note that both notions of left saturation closures are instances of a more general concept which will be explored in a future paper.

\begin{lemma}
	Let $S$ be a left denominator set in a ring $R$, $M$ a left $R$-module and $P$ a left $R$-submodule of $M$.
	Then
	\[
		P^S=\LSat_S^M(P).
	\]
\end{lemma}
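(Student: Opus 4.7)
The plan is to prove both inclusions directly, unpacking what it means for $\varepsilon(m)=(1,m)$ to lie in $S^{-1}P$ using the explicit equivalence relation on $S\times M$ from \Cref{construction_of_the_Ore_localization} (applied to $M$ instead of $R$). Throughout, I identify $S^{-1}P$ with its canonical image in $S^{-1}M$; this image consists precisely of the classes $(s,p)$ with $s\in S$ and $p\in P$, so $m\in P^S$ if and only if $(1,m)\sim(s,p)$ for some such pair.

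For the inclusion $\LSat_S^M(P)\subseteq P^S$, take $m\in M$ and $s\in S$ with $sm\in P$. I would verify that $(1,m)\sim (s,sm)$ directly: picking $\tilde s:=1\in S$ and $\tilde r:=s\in R$ yields $\tilde s\cdot s=s=\tilde r\cdot 1$ and $\tilde s\cdot sm=sm=\tilde r\cdot m$, so the equivalence holds. Since $sm\in P$, this exhibits $\varepsilon(m)$ as an element of $S^{-1}P$.

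For the reverse inclusion $P^S\subseteq\LSat_S^M(P)$, suppose $m\in P^S$, so there exist $s\in S$ and $p\in P$ with $(1,m)\sim(s,p)$. By the definition of $\sim$, there are $\tilde s\in S$ and $\tilde r\in R$ satisfying
\[
\tilde s\cdot s=\tilde r\cdot 1=\tilde r\quad\text{and}\quad\tilde s\cdot p=\tilde r\cdot m.
\]
The first equation gives $\tilde r=\tilde s s\in S$ because $S$ is multiplicative, and then the second equation reads $\tilde r m=\tilde s p\in P$ since $P$ is a submodule. Hence $m\in\LSat_S^M(P)$.

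The only subtle point is the identification of $S^{-1}P$ with a submodule of $S^{-1}M$, which is why one needs $P$ to be interpreted as a subset so that elements of $S^{-1}P$ can be compared with elements of $S^{-1}M$ through the same equivalence relation; flatness of Ore localization (or equivalently, the axiomatic description of $S^{-1}M$ via \Cref{elementary_tensors}) makes this unambiguous. Beyond that checkpoint, the proof is a one-line verification in each direction.
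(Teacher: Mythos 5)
Your proof is correct and follows essentially the same route as the paper's: both inclusions are verified directly by unpacking what $\varepsilon(m)=(1,m)\in S^{-1}P$ means via the explicit equivalence relation. The only cosmetic difference is the assignment of $(s_1,r_1)$ and $(s_2,r_2)$ in the $\sim$-condition for the inclusion $P^S\subseteq\LSat_S^M(P)$: the paper takes $(s_1,r_1)=(s,p)$ and $(s_2,r_2)=(1,m)$, which yields $\tilde s m=\tilde r p\in P$ with $\tilde s\in S$ immediately, while you take the opposite assignment and then observe $\tilde r=\tilde s s\in S$ because $S$ is multiplicatively closed — both are fine. Your closing remark about identifying $S^{-1}P$ with its image in $S^{-1}M$ is a legitimate point that the paper leaves implicit; it is exactly the flatness of $S^{-1}R$ (or, equivalently, the description of elements via \Cref{elementary_tensors}) that justifies the step from ``$\varepsilon(m)\in S^{-1}P$'' to ``$(1,m)\sim(s,p)$ for some $s\in S$, $p\in P$.''
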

\begin{proof}
	Let $\varepsilon:=\varepsilon_{S,R,M}$.
	If $m\in P^S$, then $\varepsilon(m)\in S\inv P$, thus there exist $s\in S$ and $p\in P$ such that $(1,m)=\varepsilon(m)=(s,p)$.
	This implies the existence of $\tilde{s}\in S$ and $\tilde{r}\in R$ such that $\tilde{s}\cdot1=\tilde{r}s$ and $\tilde{s}m=\tilde{r}p\in P$, but the last equation implies $m\in\LSat_S^M(P)$.
	Now let $m\in\LSat_S^M(P)$, then there exists $s\in S$ such that $sm\in P$.
	But the $\varepsilon(m)=(1,m)=(s,sm)\in S\inv P$, thus $m\in\varepsilon\inv(S\inv P)=P^S$.
\end{proof}

\begin{lemma}\label{compatibility_of_closure_with_intersections}
	Let $S$ be a left denominator set in a ring $R$, $M$ a left $R$-module and $\set{P_j}_{j\in J}$ a family of left $R$-submodules of $M$.
	Consider their intersection $P:=\bigcap_{j\in J}^{}P_j$.
	\begin{enumerate}[(a)]
		\item
			We have $P^S\subseteq\bigcap_{j\in J}^{}P_j^S$.
		\item
			If $J$ is finite, then $P^S=\bigcap_{j\in J}^{}P_j^S$.
	\end{enumerate}
\end{lemma}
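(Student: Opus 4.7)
The plan is to work with the explicit description $P^S = \LSat_S^M(P) = \set{m \in M \mid \exists\, s \in S : sm \in P}$ established in the preceding lemma, since both inclusions then reduce to straightforward statements about finding common denominators.

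For part (a), I would argue directly: if $m \in P^S$, then there exists $s \in S$ with $sm \in P$. Because $P \subseteq P_j$ for every $j \in J$, the same witness $s$ shows $m \in P_j^S$ for all $j$, hence $m \in \bigcap_{j \in J} P_j^S$. This requires no hypothesis on $J$ beyond nonemptiness (and if $J = \emptyset$ both sides equal $M$).

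For part (b), the nontrivial direction is $\bigcap_{j \in J} P_j^S \subseteq P^S$. By induction on $|J|$, it suffices to treat the case $|J| = 2$, say $P = P_1 \cap P_2$. Given $m \in P_1^S \cap P_2^S$, pick $s_1, s_2 \in S$ with $s_1 m \in P_1$ and $s_2 m \in P_2$. The plan is to combine $s_1$ and $s_2$ into a single element of $S$ that pushes $m$ into both submodules simultaneously. This is exactly where the left Ore condition enters: there exist $\tilde{s} \in S$ and $\tilde{r} \in R$ with $\tilde{s} s_1 = \tilde{r} s_2$. Set $s := \tilde{s} s_1 = \tilde{r} s_2 \in S$ (using that $S$ is multiplicatively closed). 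Then
\[
sm = \tilde{s}(s_1 m) \in P_1 \quad\text{and}\quad sm = \tilde{r}(s_2 m) \in P_2,
\]
since $P_1, P_2$ are left $R$-submodules. Thus $sm \in P_1 \cap P_2 = P$, which gives $m \in P^S$.

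The only delicate point is the common-denominator step, and it is essentially forced by the left Ore condition; once the binary case is settled, induction finishes finite $J$. This also makes transparent why the finiteness assumption cannot be dropped in general: an infinite family of witnesses $\set{s_j}_{j \in J}$ cannot in general be amalgamated into a single $s \in S$ through repeated use of the Ore condition.
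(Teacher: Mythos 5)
Your proof is correct and follows the same strategy as the paper: part (a) is the immediate one-line argument, and part (b) hinges on producing a common left multiple in $S$ of the finitely many witnesses $s_j$ via the left Ore condition. The paper simply invokes the existence of such a common left multiple, whereas you unpack it explicitly through the binary Ore step $\tilde{s}s_1 = \tilde{r}s_2$ combined with multiplicative closure of $S$, then finish by induction on $|J|$ — a cleaner exposition of the same idea rather than a genuinely different route.
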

\begin{proof}
	\begin{enumerate}[(a)]
		\item
			Let $m\in P^S$, then there exists $s\in S$ such that $sm\in P=\bigcap_{j\in J}^{}P_j$, thus $sm\in P_j$ and $m\in P_j^S$ for all $j\in J$, which implies $m\in\bigcap_{j\in J}^{}P_j^S$.
		\item
			Let $m\in\bigcap_{j\in J}^{}P_j^s$, then for all $j\in J$ there exists $s_j\in S$ such that $s_jm\in P_j$.
			Since $J$ is finite there exists a common left multiple $s\in S$ of the $s_j$ by the left Ore condition, which implies $sm\in P_j$ for all $j\in J$.
			Therefore, $sm\in\bigcap_{j\in J}^{}P_j=P$ and $m\in P^S$.\qedhere
	\end{enumerate}
\end{proof}

\section{Algorithmic toolbox}

Let $K$ be a field and consider the two commutative polynomial rings $K[x]:=K[x_1,\ldots,x_n]$ and $K[y]:=K[y_1,\ldots,y_m]$ with the ideals $I=\leftideal{K[x]}{h_1,\ldots,h_k}$ and $J=\leftideal{K[y]}{g_1,\ldots,g_l}$.
Let further $\varphi:K[x]/I\rightarrow K[y]/J,~x_i\mapsto f_i$ be the ring map induced by elements $f_1,\ldots,f_n\in K[y]$.
\Cref{alg:kernel_for_polynomial_algebras} outlines a classical Gr\"obner-driven method for computing $\ker(\varphi)$ (for details see e.g. \cite{GPS08}, Section 1.8.10).

\begin{algorithm}
	\caption{\textsc{KernelPolynomialAlgebra}}
	\label{alg:kernel_for_polynomial_algebras}
	\KwIn{$K,I,J,\varphi$ as above.}
	\KwOut{$\ker(\varphi)$.}
	\Begin{
		$H:=\leftideal{K[x,y]}{h_1,\ldots,h_k,g_1,\ldots,g_l,x_1-f_1,\ldots,x_n-f_n}$\;
		compute $H':=H\cap K[x]$ by eliminating $y_1,\ldots,y_m$\;
		\Return{$H'$}\;
	}
\end{algorithm}

Given a homomorphism of arbitrary rings $\psi:A\rightarrow B$ and a two-sided ideal $J$ in $B$, we have that $\psi\inv(J)=\ker(\varphi)$ for the induced homomorphism $\varphi:A\rightarrow B/J$.
On the other hand the kernel of a homomorphism is the preimage of the zero ideal.
Therefore computing kernels and preimages of two-sided ideals is equivalent. Note that this does not hold for preimages of left or right ideals, see \cite{LVint}.

\section{Intersection of ideals with multiplicative sets in commutative polynomial algebras}

The first problem we are interested in solving is the following:

\begin{definition}
	Let $S$ be a left denominator set in a ring $R$ and $I$ a left ideal in $R$.
	The \textit{\textbf{intersection problem}} is to decide whether $I\cap S=\emptyset$ and to compute an element contained in this intersection whenever the answer is negative.
\end{definition}

In our paper \cite{HL_ISSAC17} we have shown that this problem is integral to a constructive treatment of the Ore condition in $G$-algebras which in turn allows us to perform basic arithmetic operations in Ore localizations of $G$-algebras.

In the commutative setting it is an important ingredient for solving linear systems over commutative localizations (\cite{Posur18}).

Here we consider commutative polynomial algebras of the form $R:=K[x]/J$, where $J$ is an ideal in the commutative polynomial ring $K[x]:=K[x_1,\ldots,x_n]$.
Furthermore, let $I$ be an ideal in $R$ and fix some suitable $g_i,h_i\in K[x]$ with $J=\leftideal{K[x]}{g_1,\ldots,g_\ell}$ and $I=\leftideal{R}{h_1+J,\ldots,h_k+J}$.
In the following we give algorithms to solve the intersection problem for $I\cap S$, where $S$ is a multiplicative subset of $R$ belonging to one of the localization types described in \Cref{classification_of_localization_types} with some computability restrictions.

\subsection{Monoidal}

In this subsection we start with algorithms in commutative rings and later proceed to non-commutative ones.

Suppose we are given a monoid $S\subseteq R$, finitely generated by a set $F=\{f_1 +J,\ldots, f_m+J\}$.
Then the monoid algebra $K[S]:=K[F]\subseteq R$ is a natural subalgebra of $R$.
Moreover, consider $\psi: K[t_1,\ldots,t_m]\to K[x]/J$, $t_i\mapsto f_i +J$, then the monoid algebra $K[S]$ is a finitely presented $K$-algebra which is isomorphic to $K[t]/\ker(\psi)$.
Since $R$ is commutative, but not necessarily a domain, we have to ensure that $S^{-1}R\neq\set{0}$, which is equivalent to $0\notin S$.
The latter property can be checked with \Cref{alg_ZeroContainedInMonoid}.

\begin{algorithm}
	\caption{\textsc{ZeroContainedInMonoid}}
	\label{alg_ZeroContainedInMonoid}
	\KwIn{A subset $F=\set{f_1+J,\ldots,f_m+J}\subseteq R=K[x]/J$.}
	\KwOut{$1$, if $0\in S=\merz{F}$, and $0$ otherwise.}
	\Begin{
		let $\psi: K[t_1,\ldots,t_m]\to K[x_1,\ldots,x_n]/J$, $t_i\mapsto f_i+J$\;
		$H:=\ker(\psi)$\tcp*{preimage $\psi^{-1}(0)$}
		$M:=H:\ideal{t_1\cdot\ldots\cdot t_m}^{\infty}$\;\label{0inS-line4}
		\uIf{$1\in M$}{
			\Return{$1$}\;
		}
		\Else{
			\Return{$0$}\;
		}
	}
\end{algorithm}

\begin{proposition}
	\Cref{alg_ZeroContainedInMonoid} terminates and is correct.
\end{proposition}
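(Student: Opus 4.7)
The proof splits naturally into termination and correctness.

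For termination, each of the three subtasks—computing $H=\ker(\psi)$, computing the saturation $M=H:\ideal{t_1\cdots t_m}^{\infty}$, and testing whether $1\in M$—is a standard computation in a commutative polynomial ring over $K$, all executable by Gröbner basis techniques that terminate over a computable field. The kernel step is an instance of \Cref{alg:kernel_for_polynomial_algebras} applied with the zero ideal in the codomain, and saturation and ideal membership of $1$ are classical.

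For correctness I would prove $1\in M \iff 0\in S$ by unwinding the definitions. By the definition of ideal saturation in $K[t_1,\ldots,t_m]$, the condition $1\in M$ is equivalent to the existence of some $N\in\IN_0$ with $(t_1\cdots t_m)^N\in H=\ker(\psi)$, i.e.\ $(f_1\cdots f_m)^N\in J$. The image of this element in $R$ is then $0$ and it is manifestly a product of the $f_i+J$, so it lies in $S$, giving $0\in S$. For the converse—the main step—I would use commutativity of $R$: since $S=\merz{F}$ consists of finite products of the $f_i+J$, any element of $S$ can be written as $\prod_{i=1}^m f_i^{a_i}+J$ for some $a_i\in\IN_0$, and $0\in S$ means $\prod_{i=1}^m f_i^{a_i}\in J$ for one such tuple. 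Setting $N:=\max_i a_i$ and multiplying the relation by $\prod_i f_i^{N-a_i}\in K[x]$ (which $J$ absorbs as an ideal) yields $(f_1\cdots f_m)^N\in J$, hence $(t_1\cdots t_m)^N\in H$ and therefore $1\in M$.

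The main obstacle is precisely this ``uniform exponent'' step: one must pass from an arbitrary monomial $\prod f_i^{a_i}\in J$ to the balanced power $(f_1\cdots f_m)^N\in J$, so that saturating by the single element $t_1\cdots t_m$ captures the condition rather than requiring componentwise saturation. Everything else is a straightforward unwinding of the definitions of kernel, ideal saturation, and the multiplicative monoid $\merz{F}$.
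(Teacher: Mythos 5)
Your proof is correct and follows the same route as the paper: reduce $0\in S$ to the existence of a monomial $t^\alpha$ in $H=\ker(\psi)$, then detect this via saturation by $t_1\cdots t_m$. The only difference is that where the paper cites a known fact (that an ideal contains a monomial iff saturation by $t_1\cdots t_m$ yields the unit ideal), you prove it inline via the uniform-exponent step $N:=\max_i a_i$, which is a perfectly valid and self-contained way to supply that ingredient.
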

\begin{proof}
	We have $0\in S$ if and only if there exists $\alpha\in\IN_0^m$ such that $f^\alpha=f_1^{\alpha_1}\cdot\ldots\cdot f_m^{\alpha_m}\in J$, which in turn is equivalent to the existence of $\alpha\in\IN_0^m$ satisfying $t^\alpha\in\ker(\psi)=:H$.
	By e.~g. \cite{KR05, EM16} an ideal $H\subseteq K[t]$ contains a monomial if and only if the ideal $H:\erz{t_1\cdot\ldots\cdot t_m}^\infty$ contains $1$.
	Note that all operations involved are computable: the kernel $\ker(\psi)$ via \Cref{alg:kernel_for_polynomial_algebras} and the saturation $H:\erz{t_1\cdot\ldots\cdot t_m}^\infty$
	via \cite{GPS08}, Section 1.8.9.
\end{proof}

To solve the intersection problem in the monoidal case, we need to be able to determine the \emph{biggest monomial ideal} contained in an ideal in a commutative polynomial algebra, which can be computed with \Cref{alg_BiggestMonomialIdeal}.

\begin{algorithm}
	\caption{\textsc{BiggestMonomialIdeal}}
	\label{alg_BiggestMonomialIdeal}
	\KwIn{An ideal $L+J$ in $R=K[x]/J$.}
	\KwOut{The biggest monomial ideal contained in $L+J$.}
	\Begin{
		Let $K[x,q^{\pm1}]:=K[x,q_1,q_1\inv,\ldots,q_m,q_m\inv]$\;
		$\varphi:K[x]\rightarrow K[x,q^{\pm1}],~x_i\mapsto q_ix_i$ \tcp*{ring extension}
		$N:=\leftideal{K[x,q^{\pm 1}]}{\varphi(L)}\cap K[x]$ \tcp*{contraction of an ideal}
		\Return{$(N+J)/J$}\;
	}
\end{algorithm}

\begin{proposition}
	\Cref{alg_BiggestMonomialIdeal} terminates and is correct.
\end{proposition}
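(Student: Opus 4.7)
By replacing $L$ with $L + J$ if necessary (they represent the same ideal in $R$), we may assume $L \supseteq J$. The correctness of the algorithm then reduces to the following purely polynomial-ring statement: for any ideal $L \subseteq K[x]$,
\[
  N \;:=\; \erz{\varphi(L)}_{K[x, q^{\pm 1}]} \cap K[x] \;=\; \erz{x^\alpha \,:\, x^\alpha \in L}_{K[x]},
\]
that is, $N$ is the largest monomial ideal contained in $L$. Once this is established, the output $(N+J)/J$ is the largest monomial ideal of $R = K[x]/J$ contained in $(L+J)/J$.

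The inclusion ``$\supseteq$'' is immediate: if $x^\alpha \in L$ then $\varphi(x^\alpha) = q^\alpha x^\alpha \in \erz{\varphi(L)}$, and since $q^{-\alpha}$ is a unit in $K[x, q^{\pm 1}]$, multiplying gives $x^\alpha = q^{-\alpha} \cdot q^\alpha x^\alpha \in \erz{\varphi(L)} \cap K[x] = N$.

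The heart of the proof is the reverse inclusion, for which I would use a grading argument. Equip $K[x, q^{\pm 1}]$ with the $\mathbb{Z}^n$-grading defined by $\deg(x_i) := e_i$ and $\deg(q_i) := -e_i$, so that every $q_i x_i$, and hence every element of $\varphi(K[x])$, is homogeneous of degree $0$. Consequently $\erz{\varphi(L)}$ is a $\mathbb{Z}^n$-homogeneous ideal. For any $f = \sum_\alpha c_\alpha x^\alpha \in N$, each term $c_\alpha x^\alpha$ is itself the degree-$\alpha$ homogeneous component of $f$ when $f$ is viewed in the extended ring, so homogeneity forces $c_\alpha x^\alpha \in \erz{\varphi(L)}$. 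It then suffices to show that every monomial $x^\alpha \in \erz{\varphi(L)}$ already lies in $L$; for this I apply the specialization homomorphism $\psi : K[x, q^{\pm 1}] \to K[x]$, $x_i \mapsto x_i$, $q_i \mapsto 1$, which satisfies $\psi \circ \varphi = \mathrm{id}_{K[x]}$, whence $\psi(\erz{\varphi(L)}) \subseteq L$ and so $x^\alpha = \psi(x^\alpha) \in L$.

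Termination is routine via standard Gr\"obner basis methods: one realizes $K[x, q^{\pm 1}]$ as the quotient $K[x, q, p] / \erz{p_1 q_1 - 1, \ldots, p_n q_n - 1}$ and computes $N$ by elimination of the auxiliary variables $p_i$ and $q_i$. The only conceptual obstacle is the grading argument that forces $N$ to be a monomial ideal; recovering the concrete generating monomials via $\psi$ and checking the easy inclusion are both elementary.
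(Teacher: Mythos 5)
Your proof is correct and, at the level of the key fact, is a genuine expansion rather than a mere retracing: the paper's own proof simply cites the literature (Kreuzer--Robbiano, Eisenbud--Sturmfels, Saito--Sturmfels--Takayama) for the statement that $N := \erz{\varphi(L)} \cap K[x]$ is the largest monomial ideal contained in $L$, whereas you reprove it from scratch. Your mechanism is the standard one underlying those references, namely the $\mathbb{Z}^n$-grading on $K[x,q^{\pm1}]$ with $\deg x_i = e_i$, $\deg q_i = -e_i$, under which $\varphi(K[x])$ sits in degree $0$; this forces $N$ to be a monomial ideal, and the retraction $\psi$ with $\psi\circ\varphi=\mathrm{id}_{K[x]}$ pulls each such monomial back into $L$. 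This buys self-containment at essentially no extra cost. You also make explicit the normalization $L \supseteq J$, which the paper leaves tacit when it asserts ``$m+J \in L+J$ if and only if $m \in L$''; that equivalence is false for a general representative $L$, and your preliminary replacement of $L$ by $L+J$ is exactly the hypothesis needed to make the quotient step (and the algorithm's output $(N+J)/J$) well defined independently of the chosen generators. One small remark: the statement $\psi(\erz{\varphi(L)}) \subseteq L$ deserves the one-line justification that $\psi\bigl(\sum_i a_i \varphi(\ell_i)\bigr) = \sum_i \psi(a_i)\,\ell_i$ with $\psi(a_i)\in K[x]$ and $\ell_i\in L$, but this is routine. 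Overall the argument is sound and a touch more rigorous than what is printed.
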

\begin{proof}
	Termination is clear.
	Consider the Laurent polynomial ring
	\[
		K[q^{\pm1}]:=K[q_1,q_1^{-1},\ldots,q_m,q_m^{-1}]
	\]
	and a homomorphism of $K$-algebras $\varphi:K[x]\to K[x,q^{\pm 1}]$, $x_i\mapsto q_i x_i$.
	By \cite{KR05, EM16, SST00}, the biggest monomial ideal contained in $L\subseteq K[x]$ is exactly $N$.
	
	Since for all $m\in K[x]$ we have $m+J\in L+J$ if and only if $m\in L$, this is in particular true for monomials.
	Therefore the biggest monomial ideal of $L+J$ is the biggest monomial ideal of $L$ modulo $J$.
\end{proof}

Now we have all the tools to consider the general situation.

\begin{algorithm}
	\caption{\textsc{NCIdealIntersectionWithMonoid}}
	\label{alg_NCIdealIntersectionWithMonoid}
	\KwIn{A left ideal $I\subseteq A$, a generating set (of a monoid $S$) $F=\{f_1,\ldots,f_m\}$ in the $K$-algebra $A$, such that $f_i\in A$ commute pairwise.}
	\KwOut{$I\cap S$: either $\emptyset$ or a finite set of monomial generators $\set{ t^{\alpha}:\alpha\in\IN_0^n}\subseteq[t_1,\ldots,t_m]$.}
	\Begin{
		$\psi:K[t_1,\ldots,t_m]\to A$, $t_i\mapsto f_i$\;
		$L:=\psi^{-1}(I)\subseteq K[t_1,\ldots,t_m]$\tcp*{preimage of $I\subseteq A$}
		\If{$\psi(L)=0$} {
			\Return{$\emptyset$}\tcp*{since then $\psi^{-1}(I) =\ker(\psi)$}
		}\label{preim_comp}
		$R := K[t_1,\ldots,t_m]/\ker(\psi)$\;
		$M := $ \textsc{BiggestMonomialIdeal}$(L,R)$\;
		\If{$M=\set{0}$}{
			\Return{$\emptyset$}\;
		}
		\Return{ $M$}\;
	}
\end{algorithm}

\begin{proposition}\label{proof_for_alg_NCIdealIntersectionWithMonoid}
	Let $A$ be an associative (but not necessarily commutative) unital $K$-algebra and $F=\{f_1,\ldots, f_m\}\subseteq A$ be a set of \textbf{pairwise commuting elements} in $A$.
	Moreover, let $S\subseteq A$ be the monoid in $A$ generated by $F$.
	Then \Cref{alg_NCIdealIntersectionWithMonoid} correctly computes $I\cap S$.
	Furthermore, its termination depends solely on the termination of the computation of $\psi^{-1}(I)$, which in turn depends on $A, I$ and $F$.
\end{proposition}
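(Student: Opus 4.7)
The plan is to exploit the pairwise commutativity of the $f_i$ to reduce the intersection problem to finding a monomial in $L=\psi^{-1}(I)$ whose image under $\psi$ is non-zero, and then to solve that sub-problem via \Cref{alg_BiggestMonomialIdeal}. Since the $f_i$ commute, every element of $S$ has the form $f^\alpha:=f_1^{\alpha_1}\cdots f_m^{\alpha_m}$ with $\alpha\in\IN_0^m$, so $\psi(t^\alpha)=f^\alpha$ and the equivalence $f^\alpha\in I\iff t^\alpha\in L$ holds. Under the standing convention $0\notin S$ that is required for a non-trivial localization, this shows $I\cap S\neq\emptyset$ if and only if $L$ contains some monomial $t^\alpha$ with $t^\alpha\notin\ker(\psi)$.

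The algorithm realizes this reformulation in two stages. First, since $\ker(\psi)\subseteq L$ is automatic, the test $\psi(L)=0$ is equivalent to $L=\ker(\psi)$; in that case every monomial in $L$ lies in $\ker(\psi)$, so no non-zero $f^\alpha$ belongs to $I$ and the algorithm correctly returns $\emptyset$. Otherwise, writing $N\subseteq K[t_1,\ldots,t_m]$ for the biggest monomial ideal contained in $L$, the call to \textsc{BiggestMonomialIdeal} with inputs $L$ and $R$ returns $M=(N+\ker(\psi))/\ker(\psi)$ by the correctness of \Cref{alg_BiggestMonomialIdeal} combined with the identity $L+\ker(\psi)=L$. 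Since the monomials in $L$ are exactly those generating $N$, we have $M=0$ iff $N\subseteq\ker(\psi)$ iff every monomial of $L$ maps to zero under $\psi$ iff $I\cap S=\emptyset$; when $M\neq 0$, its monomial generators $t^\alpha$ correspond to non-zero elements $f^\alpha\in I\cap S$.

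For termination, every subroutine invoked is effective: kernels, saturations and elimination in commutative polynomial rings are standard, and \Cref{alg_BiggestMonomialIdeal} has already been shown to terminate. The only contingent step is the preimage computation $L=\psi^{-1}(I)$, whose feasibility depends on the concrete data $A$, $I$ and $F$; for $A$ a commutative polynomial algebra it reduces to an elimination in the style of \Cref{alg:kernel_for_polynomial_algebras}. This preimage computation is both the main obstacle and the single item referenced in the termination statement.
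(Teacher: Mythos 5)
Your proof is correct and follows essentially the same route as the paper's: pass to the preimage $L=\psi^{-1}(I)$ and observe that, under the convention $0\notin S$, the elements of $I\cap S$ correspond exactly to the monomials contained in $L$, which \textsc{BiggestMonomialIdeal} extracts. The paper's own proof is considerably terser (it only states the reduction to the subproblem ``intersect $L$ with the monoid $[t_1,\ldots,t_m]$''), whereas you additionally verify the branch $\psi(L)=0\Leftrightarrow L=\ker(\psi)$, make explicit the identity $L+\ker(\psi)=L$ needed for the call to \Cref{alg_BiggestMonomialIdeal}, and spell out why $M=0$ characterizes $I\cap S=\emptyset$; these are details the paper leaves implicit, but your argument is the same argument fleshed out, not a different one.
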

\begin{proof}
	The $K$-monoid algebra $K[S]=K[f_1,\ldots, f_m]\subseteq A$ is a $K$-subalgebra of $A$ and there is a natural homomorphism of $K$-algebras
	\[
		\psi: K[t_1,\ldots,t_m]\to A, t_i\mapsto f_i.
	\]
	Then $K[S]\cong K[t_1,\ldots,t_m]/\ker(\psi)$, hence the monoid algebra $K[S]$ is a finitely presented commutative $K$-algebra.
	As soon as the preimage $\psi^{-1}(I)=I\cap K[t_1,\ldots,t_m]$ is computable we are left with the following problem: given an ideal $L\subseteq K[t_1,\ldots,t_m]/J$, compute an intersection of $L$ with the submonoid $[t_1,\ldots,t_m]$, which is solved by \Cref{alg_BiggestMonomialIdeal}.
\end{proof}

\begin{corollary}
	Consider the situation of \Cref{proof_for_alg_NCIdealIntersectionWithMonoid}.
	\begin{itemize}
		\item
			If $A$ is a commutative polynomial algebra, \Cref{alg_NCIdealIntersectionWithMonoid} terminates for any $I$ and $F$.
		\item
			If $A$ is a $GR$-algebra, the \textsc{ncPreimage} algorithm from \cite{LVint} either returns the preimage or reports that the computability condition is violated.
			Namely, \textsc{ncPreimage} assumes that a $GR$-algebra $A$ is equipped with an \emph{admissible} elimination ordering.
			If a certain integer programming problem has a solution, such an ordering can be constructed from it, while infeasibility of the problem proves that no such ordering exists.
	\end{itemize}
\end{corollary}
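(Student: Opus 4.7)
The plan is to reduce both items to the statement of \Cref{proof_for_alg_NCIdealIntersectionWithMonoid}, which already guarantees correctness and which shows that termination of \Cref{alg_NCIdealIntersectionWithMonoid} hinges solely on the computability of the preimage $\psi\inv(I)$, where $\psi:K[t_1,\ldots,t_m]\to A,~t_i\mapsto f_i$. Both parts of the corollary are therefore statements about when this preimage computation terminates in the respective algebraic setting. The remaining sub-steps inside the algorithm, namely \Cref{alg_BiggestMonomialIdeal} and the check whether $\psi(L)=0$, are purely commutative Gröbner basis computations in $K[t_1,\ldots,t_m]$ modulo $\ker(\psi)$ and terminate unconditionally.

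For the first item, I would argue that when $A=K[x_1,\ldots,x_n]/J$ is a commutative polynomial algebra, the map $\psi$ is a homomorphism between finitely presented commutative $K$-algebras. As remarked in the paragraph following \Cref{alg:kernel_for_polynomial_algebras}, preimages of two-sided ideals under such a homomorphism reduce to kernel computations: $\psi\inv(I)=\ker(\pi\circ\psi)$, where $\pi:A\to A/I$ is the canonical projection. Hence $\psi\inv(I)$ can be computed by a direct application of \Cref{alg:kernel_for_polynomial_algebras} to the composed map, which terminates because elimination of the $x$-variables via a block elimination ordering on $K[x,t]$ is always feasible in the commutative case. Together with \Cref{proof_for_alg_NCIdealIntersectionWithMonoid} this yields termination for any choice of $I$ and $F$.

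For the second item, the situation is genuinely more delicate because preimages in the non-commutative setting are not universally computable. The approach here is simply to invoke the \textsc{ncPreimage} algorithm from \cite{LVint}. That algorithm attempts to construct an admissible elimination ordering on the relevant $GR$-algebra; its existence is encoded as feasibility of an integer programming problem attached to the defining relations, and this feasibility can be algorithmically decided. If a feasible solution is produced, \textsc{ncPreimage} returns $\psi\inv(I)$, at which point \Cref{proof_for_alg_NCIdealIntersectionWithMonoid} again yields the desired intersection. If the integer program is proved infeasible, then no admissible elimination ordering exists and the algorithm reports exactly this obstruction, as claimed.

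The main obstacle is conceptual rather than computational and lies entirely in the second item: admissible elimination orderings need not exist for arbitrary $GR$-algebras and arbitrary choices of $F$, so one cannot promise termination with a preimage in full generality. Our statement accounts for this by weakening the output of the $GR$-case to \emph{either} a preimage \emph{or} a certified failure of the computability condition, pushing the burden of justification onto the integer programming criterion from \cite{LVint} rather than onto a new argument in the present paper.
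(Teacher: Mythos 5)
The paper does not supply an explicit proof for this corollary, and your argument correctly fills in the intended reasoning: reduce both items to the observation in \Cref{proof_for_alg_NCIdealIntersectionWithMonoid} that termination hinges solely on computability of $\psi^{-1}(I)$, note that this is unconditionally achievable in the commutative case by \Cref{alg:kernel_for_polynomial_algebras} via block elimination, and defer the $GR$-algebra case to the \textsc{ncPreimage} algorithm and its integer-programming feasibility criterion from \cite{LVint}. This matches the paper's approach.
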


\subsection{Geometric}

Let $\mf{p}=\leftideal{R}{p_1+J,\ldots,p_m+J}$ be a prime ideal in $R$ with $p_i\in K[x]$ and consider the multiplicative set $S:=R\setminus\mf{p}$.
The preimage of $\mf{p}$ under the canonical surjection $K[x]\rightarrow R$ is given by the ideal $\mf{q}:=\leftideal{K[x]}{p_1,\ldots,p_m,g_1,\ldots,g_\ell}$.
Now there are two possible cases:
\begin{description}
	\item[Case 1:]
		$h_i\in\mf{q}$ for all $i$.
		Then $h_i+J\in\mf{p}$ for all $i$ and thus $I\subseteq\mf{p}$, which implies $I\cap S=I\cap(R\setminus\mf{p})=I\setminus\mf{p}=\emptyset$.
	\item[Case 2:]
		$h_i\notin\mf{q}$ for some $i$.
		Then $h_i+J\notin\mf{p}$ and thus $I\cap S\neq\emptyset$.
\end{description}
Since ideal membership in polynomial rings can be decided with Gr\"obner basis tools, this observations lead to \Cref{alg:geometric_intersection}, where $\NF(h_i|\mf{q})$ denotes the normal form of $h_i$ with respect to (a Gr\"obner basis of) the ideal $\mf{q}$.

\begin{algorithm}
	\caption{\textsc{CommutativeGeometricIntersection}}
	\label{alg:geometric_intersection}
	\KwIn{Ideals $I,J,\mf{p}$ and the multiplicative set $S$ as above.}
	\KwOut{An element of $I\cap S$ (if $I\cap S\neq\emptyset$) or $0$ (if $I\cap S=\emptyset$).}
	\Begin{
		let $\mf{q}:=\leftideal{K[x]}{p_1,\ldots,p_m,g_1,\ldots,g_\ell}\subseteq K[x]$\;
		\ForEach{$i\in\nset{k}$}{
			\If{$\NF(h_i|\mf{q})\neq0$}{
				\Return{$h_i+J$}\;
			}
		}
		\Return{$0$}\;
	}
	
\end{algorithm}

\subsection{Rational}

Let $r\in\nset{n}$ and consider $\hat{S}:=K[x_1+J,\ldots,x_r+J]$ as well as $S:=\hat{S}\setminus\set{0}$.
Let $K[t]:=K[t_1,\ldots,t_r]$ and define the map $\varphi:K[t]\rightarrow R$ by $t_i\mapsto x_i$ for $1\leq i\leq r$.

\begin{lemma}\label{rational_intersection_characterization}
	In the situation above we have $I\cap S=\emptyset$ if and only if $\varphi\inv(I)\subseteq\ker(\varphi)$.
\end{lemma}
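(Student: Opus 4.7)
The plan is a direct double containment argument that hinges on two simple facts: the image of $\varphi$ is exactly $\hat{S}$, and $S = \hat{S}\setminus\{0\}$. Since $0\in I$, the inclusion $\ker(\varphi)\subseteq\varphi^{-1}(I)$ always holds, so the condition $\varphi^{-1}(I)\subseteq\ker(\varphi)$ is really the assertion $\varphi^{-1}(I)=\ker(\varphi)$, i.e., the only preimages of elements of $I$ are those mapping to $0$.

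For the forward direction, I would assume $I\cap S=\emptyset$ and take any $p\in\varphi^{-1}(I)$. Then $\varphi(p)\in I$ and $\varphi(p)\in\im(\varphi)=\hat{S}$. If $\varphi(p)\neq 0$, then $\varphi(p)\in\hat{S}\setminus\{0\}=S$, which would place $\varphi(p)$ in $I\cap S$, contradicting the hypothesis. Hence $\varphi(p)=0$, giving $p\in\ker(\varphi)$.

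For the backward direction, I would argue by contradiction: suppose $\varphi^{-1}(I)\subseteq\ker(\varphi)$ but there exists some $x\in I\cap S$. Since $x\in S\subseteq\hat{S}=\im(\varphi)$, we can lift $x=\varphi(p)$ for some $p\in K[t]$. Because $\varphi(p)=x\in I$, we have $p\in\varphi^{-1}(I)\subseteq\ker(\varphi)$, so $x=\varphi(p)=0$; but this contradicts $x\in S=\hat{S}\setminus\{0\}$. Hence $I\cap S=\emptyset$.

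There is no real obstacle here; the argument is essentially bookkeeping once one observes that $\im(\varphi)=\hat{S}$. The only thing worth flagging is the innocuous asymmetry that $\ker(\varphi)\subseteq\varphi^{-1}(I)$ is automatic while the reverse inclusion is the nontrivial content, so the statement is crisper if phrased as $\varphi^{-1}(I)=\ker(\varphi)$; this observation can be stated in one line at the beginning of the proof and then both directions become immediate.
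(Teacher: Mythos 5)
Your proof is correct and follows essentially the same route as the paper: both directions rest on the identification $\im(\varphi)=\hat{S}$ together with $S=\hat{S}\setminus\{0\}$, and both directions translate an element of one side into the other via this identification. The only cosmetic differences are that you phrase the forward direction by contradiction where the paper argues by a direct chain of inclusions, and you add the (correct but inessential) remark that the condition is in fact an equality $\varphi^{-1}(I)=\ker(\varphi)$ since $0\in I$ always gives $\ker(\varphi)\subseteq\varphi^{-1}(I)$.
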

\begin{proof}
	Let $I\cap S=\emptyset$, then $I\cap\hat{S}=\set{0}$.
	Now
	$\varphi\inv(I)\subseteq\ker(\varphi)$ follows directly from
	$\varphi(\varphi\inv(I))\subseteq I\cap\im(\varphi)=I\cap\hat{S}=\set{0}$.

	On the other hand, let $\varphi\inv(I)\subseteq\ker(\varphi)$ and choose an element $w\in I\cap\hat{S}=I\cap\im(\varphi)$.
	Then there exists $v\in K[t]$ such that $\varphi(v)=w\in I$ and thus $v\in\varphi\inv(I)\subseteq\ker(\varphi)$.
	This implies $w=\varphi(v)=0$ and therefore $I\cap\hat{S}=\set{0}$ or, equivalently, $I\cap S=\emptyset$.
\end{proof}

\begin{algorithm}
	\caption{\textsc{CommutativeRationalIntersection}}
	\label{alg:rational_intersection}
	\KwIn{$I,J,r,S$ as above.}
	\KwOut{An element of $I\cap S$ (if $I\cap S\neq\emptyset$) or $0$ (if $I\cap S=\emptyset$).}
	\Begin{
		let $\varphi:K[t]\rightarrow K[x]/J,~t_i\mapsto x_i$\;
		compute the preimage $\varphi\inv(I)=\leftideal{K[t]}{w_1,\ldots,w_m}$\;
		\ForEach{$i\in\nset{m}$}{
			\If{$\varphi(w_i)\neq0$}{
				\Return{$\varphi(w_i)$}\;
			}
		}
		\Return{$0$}\;
	}
\end{algorithm}

\begin{proposition}
	\Cref{alg:rational_intersection} terminates and is correct.
\end{proposition}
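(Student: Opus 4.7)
The plan is to reduce the correctness of the algorithm to \Cref{rational_intersection_characterization}, which already encodes the core semantic content: $I\cap S=\emptyset$ if and only if $\varphi\inv(I)\subseteq\ker(\varphi)$. Given this equivalence, the algorithm is essentially just a generator-level witness test, and the argument splits cleanly into termination and two correctness cases.

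For termination, the only nontrivial step is the computation of $\varphi\inv(I)$. I would lift $I$ to the ideal $\tilde{I}:=\leftideal{K[x]}{h_1,\ldots,h_k,g_1,\ldots,g_\ell}$ in $K[x]$, so that $K[x]/\tilde{I}\cong R/I$ and $\varphi\inv(I)$ coincides with the kernel of the composed map $K[t]\to K[x]\to K[x]/\tilde{I}$. This kernel is then computable by \Cref{alg:kernel_for_polynomial_algebras}. The remaining steps are a finite loop over the generators $w_1,\ldots,w_m$ of $\varphi\inv(I)$ and one normal-form test per generator, all of which terminate.

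For correctness, I treat the two cases of \Cref{rational_intersection_characterization} separately. If $I\cap S=\emptyset$, then $\varphi\inv(I)\subseteq\ker(\varphi)$, so in particular each generator $w_i$ lies in $\ker(\varphi)$, every test in the loop fails, and the algorithm returns $0$, as required. If $I\cap S\neq\emptyset$, then $\varphi\inv(I)\not\subseteq\ker(\varphi)$, so there is some $v\in\varphi\inv(I)$ with $\varphi(v)\neq0$. Writing $v=\sum_{i}a_iw_i$ with $a_i\in K[t]$ gives $\varphi(v)=\sum_{i}\varphi(a_i)\varphi(w_i)$, forcing at least one $\varphi(w_i)$ to be nonzero; the algorithm returns this element. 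Since $w_i\in\varphi\inv(I)$ we have $\varphi(w_i)\in I$, since $\varphi(w_i)\in\im(\varphi)=\hat{S}$ and it is nonzero we have $\varphi(w_i)\in S$, so the returned element is a valid witness in $I\cap S$.

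There is no real obstacle here: the heart of the proof has already been isolated in \Cref{rational_intersection_characterization}, and termination is reduced to the kernel computation that was discussed in \Cref{alg:kernel_for_polynomial_algebras}. The only care needed is the routine verification that testing $\varphi\inv(I)\subseteq\ker(\varphi)$ on a generating set of $\varphi\inv(I)$ is sufficient, which is immediate from $K$-linearity of $\varphi$ together with $\varphi(a_iw_i)=\varphi(a_i)\varphi(w_i)$.
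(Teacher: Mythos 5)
Your proof is correct and follows essentially the same route as the paper: reduce to \Cref{rational_intersection_characterization}, compute $\varphi\inv(I)$ via \Cref{alg:kernel_for_polynomial_algebras}, and observe that checking containment in $\ker(\varphi)$ on a generating set suffices. You simply spell out the routine details (the generator-level reduction, and why a nonzero $\varphi(w_i)$ is a valid witness in $I\cap S$) that the paper leaves implicit.
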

\begin{proof}
	Termination is obvious.
	The preimage $\varphi\inv(I)$ can be computed via \Cref{alg:kernel_for_polynomial_algebras}.
	Now we check whether $\varphi\inv(I)$ is contained in $\ker(\varphi)$ on the generators $w_i$.
	Correctness follows then from \Cref{rational_intersection_characterization}.
\end{proof}

Another way to look at \Cref{alg:rational_intersection} is the following: the preimage computation gives us \\
$\leftideal{K[x]}{g_1,\ldots,g_\ell,h_1,\ldots,h_k}\cap K[x_1,\ldots,x_r]$; then we look for generators of this ideal that do not vanish modulo $J$.

\section{Applications to local closure in the commutative setting}
\label{sectAppLocalClosureComm}

Recall the following basic concepts of the theory of commutative rings:
The \emph{radical} of an ideal $I$ in a commutative ring $R$ is the ideal defined as $\sqrt{I}:=\set{r\in R\mid r^n\in I\text{ for some }n\in\IN}$.
A proper ideal $I$ of a commutative ring $R$ is called \emph{primary} if for all $a,b\in R$ such that $ab\in I$ we have $a\in I$ or $b\in\sqrt{I}$.
In a more symmetric view, $I$ is primary if and only if for all $a,b\in R$ with $ab\in I$ we have $a\in I$ or $b\in I$ or ($a\in\sqrt{I}$ and $b\in\sqrt{I}$).
The radical of a primary ideal is always a prime ideal.
If $Q$ is primary with radical $P=\sqrt{Q}$, then $Q$ is also called \emph{$P$-primary}.
An ideal is called \emph{decomposable} if it can be written as an intersection of finitely many primary ideals.

The goal of this section is to show how to compute the $S$-closure of an ideal $I$ in a commutative ring $R$ under two assumptions:
\begin{enumerate}[(1)]
	\item
		We can decide whether $Q\cap S=\emptyset$ for any primary ideal $Q$ in $R$.
	\item
		The ideal $I$ is decomposable and we are either given a primary ideal decomposition of $I$ or are able to compute one.
		Note that a primary decomposition always exists in Noetherian rings.
		In polynomial algebras it can be algorithmically computed, see e.g. \cite{GPS08}, though its embedded components are not unique.
\end{enumerate}

The main ingredient is the following observation, which highlights the differences between primary ideals and arbitrary ideals:

\begin{lemma}\label{closure_of_primary_ideals}
	Let $S$ be a multiplicative set of a commutative ring $R$.
	\begin{enumerate}[(a)]
		\item
			If $I$ is an arbitrary ideal in $R$ such that $I\cap S\neq\emptyset$, then $I^S=R$.
		\item
			If $Q$ is a primary ideal in $R$ such that $Q\cap S=\emptyset$, then $Q^S=Q$.
	\end{enumerate}
\end{lemma}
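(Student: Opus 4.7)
The plan is to unfold the definition $P^S = \LSat_S^M(P) = \{m \in M : \exists s \in S,\ sm \in P\}$ established in the previous lemma, specialized to $M = R$ and $P$ an ideal, and then to exploit in each case the appropriate structural property: ideal absorption in (a), and the primary condition combined with multiplicative closure of $S$ in (b).

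For part (a), I would pick an element $s \in I \cap S$ and note that for every $r \in R$ we have $s r \in I$ because $I$ is an ideal. Hence every $r \in R$ satisfies the defining condition of $\LSat_S^R(I)$ via this single witness $s$, so $R \subseteq I^S$, and the reverse inclusion is trivial.

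For part (b), the inclusion $Q \subseteq Q^S$ is immediate (take $s = 1$). For the converse, let $r \in Q^S$, so there is some $s \in S$ with $sr \in Q$. Since $Q$ is primary, either $r \in Q$, or $s \in \sqrt{Q}$, meaning $s^n \in Q$ for some $n \in \mathbb{N}$. The main (and only) obstacle is ruling out the second alternative: here I would use that $S$ is multiplicatively closed, so $s^n \in S$, which together with $s^n \in Q$ would contradict the hypothesis $Q \cap S = \emptyset$. Therefore $r \in Q$, giving $Q^S \subseteq Q$ and hence equality.
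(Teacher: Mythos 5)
Your proof is correct and follows essentially the same route as the paper's: part (a) exploits ideal absorption with a single witness from $I\cap S$, and part (b) applies the primary condition to $sr\in Q$ and rules out $s\in\sqrt{Q}$ by noting that otherwise some power of $s$ would land in $Q\cap S$, contradicting the hypothesis. The only cosmetic difference is that the paper proves $1\in I^S$ and deduces $I^S=R$ (since $I^S$ is an ideal), and it phrases the primary condition in its symmetric three-case form; neither change affects the substance.
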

\begin{proof}
	Let $w\in I\cap S$, then $w\cdot1=w\in I$, thus $1\in I^S$ and therefore $I^S=R$.
	On the other hand, let $Q\cap S=\emptyset$ and $r\in Q^S$, then there exists $s\in S$ such that $sr\in Q$.
	Since $Q$ is primary we have $s\in Q$ or $r\in Q$ or ($s\in\sqrt{Q}$ and $r\in\sqrt{Q}$).
	But $s\notin\sqrt{Q}$ (and therefore $s\notin Q$), because otherwise $s^n\in Q\cap S=\emptyset$ for some $n\in\IN$.
	Thus the only remaining option is $r\in Q$, which implies $Q^S=Q$.
\end{proof}

Let $I$ be a decomposable ideal with a primary decomposition $I=\bigcap_{i=1}^{n}Q_i$.
Then $I^S=\bigcap_{i=1}^{n}Q_i^S$ by \Cref{compatibility_of_closure_with_intersections}.
Combining this with \Cref{closure_of_primary_ideals} we can compute $I^S$ for any multiplicative set $S$ via \Cref{alg:decomposition_closure} if we can decide non-emptiness of the intersections $Q_i\cap S$.

\begin{algorithm}
	\caption{\textsc{CommutativeLocalClosureDecomp}}
	\label{alg:decomposition_closure}
	\KwIn{A decomposable ideal $I=\bigcap_{i=1}^{n}Q_i$ and a multiplicative set $S$ in a commutative ring $R$.}
	\KwOut{$I^S$.}
	\Begin{
		\ForEach{$i\in\nset{n}$}{
			\uIf{$Q_i\cap S=\emptyset$}{
				$\tilde{Q}_i:=Q_i$\;
			}
			\Else{
				$\tilde{Q}_i:=R$\;
			}
		}
		\Return{$\tilde{I}:=\bigcap_{i=1}^{n}\tilde{Q}_i$}\;
	}
\end{algorithm}

In particular, we have the following:

\begin{corollary}
	Let $S$ be a multiplicative set of a commutative ring $R$ and $I$ a decomposable ideal in $R$.
	Then there exists an ideal $J$ in $R$ satisfying $I=I^S\cap J$ and $S\inv J=R$.
\end{corollary}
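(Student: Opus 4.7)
The plan is to leverage the primary decomposition together with the two lemmas already established, \Cref{closure_of_primary_ideals} and \Cref{compatibility_of_closure_with_intersections}, and to exhibit $J$ explicitly as the intersection of those primary components of $I$ that meet $S$.

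First, I would fix a primary decomposition $I = \bigcap_{i=1}^n Q_i$ and partition the indices into
\[
A := \set{i\in\nset{n}\mid Q_i\cap S=\emptyset}
\quad\text{and}\quad
B := \nset{n}\setminus A.
\]
Then I would define
\[
J := \bigcap_{i\in B} Q_i,
\]
with the convention $J := R$ when $B$ is empty.

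For the identity $I = I^S \cap J$, I would combine the two lemmas: by \Cref{compatibility_of_closure_with_intersections} we have $I^S = \bigcap_{i=1}^n Q_i^S$, and by \Cref{closure_of_primary_ideals} $Q_i^S = Q_i$ for $i\in A$ while $Q_i^S = R$ for $i\in B$. Thus $I^S = \bigcap_{i\in A}Q_i$, and consequently
\[
I^S\cap J \;=\; \bigcap_{i\in A}Q_i \,\cap\, \bigcap_{i\in B}Q_i \;=\; \bigcap_{i=1}^n Q_i \;=\; I.
\]

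To verify $S\inv J = R$ (read as the unit ideal in $S\inv R$), I would exhibit an element of $J\cap S$: for each $i\in B$ choose $s_i\in Q_i\cap S$ and set $s := \prod_{i\in B} s_i$. Multiplicativity of $S$ gives $s\in S$, and since each $s_i$ lies in the ideal $Q_i$, the product $s$ lies in every $Q_i$ for $i\in B$, hence in $J$. As $s$ becomes a unit in $S\inv R$ while lying in $J$, we obtain $S\inv J = S\inv R$. Equivalently, applying \Cref{compatibility_of_closure_with_intersections} and \Cref{closure_of_primary_ideals} to $J$ itself already yields $J^S = R$, which gives the same conclusion.

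I do not foresee a substantial obstacle here — this is essentially bookkeeping on top of the two preceding lemmas. The only mild subtlety is the degenerate case $B=\emptyset$, where one takes $J=R$; then $I = I^S\cap R = I^S$ is exactly what \Cref{closure_of_primary_ideals} gives component-wise, and $S\inv R = R$ is trivial.
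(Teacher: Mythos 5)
Your proof is correct and follows exactly the paper's argument: take a primary decomposition, let $J$ be the intersection of the components meeting $S$, then invoke \Cref{compatibility_of_closure_with_intersections} and \Cref{closure_of_primary_ideals}. You simply spell out the bookkeeping (the index partition, the explicit element $s=\prod_{i\in B}s_i\in J\cap S$, the degenerate case $B=\emptyset$) that the paper leaves as ``the claim follows from the observations above.''
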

\begin{proof}
	Let $I=\bigcap_{i=1}^{n}Q_i$ be a decomposition of $I$ into primary ideals.
	Then $I^S$ is the intersection of all $Q_i$ such that $Q_i\cap S=\emptyset$.
	Define $J$ to be the intersection of all $Q_i$ such that $Q_i\cap S\neq\emptyset$, then the claim follows from the observations above.
\end{proof}

The question remains how to decide whether $Q\cap S$ is empty or not.
This can be reduced to the same question for prime ideals:

\begin{lemma}
\label{Lemma20}
	Let $S$ be a multiplicative set of a commutative ring $R$ and $Q$ a $P$-primary ideal in $R$.
	Then $Q\cap S=\emptyset$ if and only if $P\cap S=\emptyset$.
\end{lemma}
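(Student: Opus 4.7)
The plan is to prove both implications by exploiting the fundamental relation $Q \subseteq \sqrt{Q} = P$ together with the multiplicative closure of $S$.

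For the direction $P \cap S = \emptyset \Rightarrow Q \cap S = \emptyset$, I would simply observe that $Q \subseteq P$, so any element of $Q \cap S$ would automatically lie in $P \cap S$. Hence $Q \cap S \subseteq P \cap S = \emptyset$. This direction requires no primarity of $Q$ beyond the inclusion into its radical.

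For the converse direction $Q \cap S = \emptyset \Rightarrow P \cap S = \emptyset$, I would argue by contradiction. Suppose there exists $s \in P \cap S$. Since $P = \sqrt{Q}$, there exists some $n \in \mathbb{N}$ with $s^n \in Q$. On the other hand, the multiplicative closure of $S$ (together with $1 \in S$) forces $s^n \in S$. Thus $s^n \in Q \cap S$, contradicting the hypothesis.

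There is no real obstacle here; the lemma is a direct consequence of the definition of the radical and the closure properties of a multiplicative set, and the primarity of $Q$ is only needed implicitly through the identity $\sqrt{Q} = P$. In fact this observation matches the spirit of the proof of \Cref{closure_of_primary_ideals}(b), where the same argument ($s \in \sqrt{Q}$ would imply $s^n \in Q \cap S$) was used to rule out the case $s \in \sqrt{Q}$.
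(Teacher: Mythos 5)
Your proof is correct and follows essentially the same two-line argument as the paper: the forward direction uses $Q \subseteq P$, and the converse uses $s \in P = \sqrt{Q}$ to produce $s^n \in Q \cap S$. The only difference is expository (you phrase the second direction as a contradiction rather than the paper's direct contrapositive), which is immaterial.
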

\begin{proof}
	If $P\cap S=\emptyset$, then $Q\cap S=\emptyset$ since $Q\subseteq P$.
	If $s\in P\cap S$, then $s^n\in Q\cap S$ for some $n\in\IN$.
\end{proof}

For certain localization types, the latter can be characterized in a way that allows for an algorithmic treatment:

\begin{lemma}
\label{Lemma21}
	Let $S$ be a multiplicative set of a commutative ring $R$ and $P$ a prime ideal in $R$.
	\begin{enumerate}[(a)]
		\item
			Let $S=\merz{s_1,\ldots,s_n}$ for some $s_i\in R$.
			Then $P\cap S=\emptyset$ if and only if $P\cap\set{s_1,\ldots,s_n}=\emptyset$.
		\item
			Let $S=R\setminus\mf{p}$ \; for some prime ideal $\mf{p}\subset R$.
			Then $P\cap S=\emptyset$ if and only if $P\subseteq\mf{p}$.
		\item
			Let $S=T\setminus\set{0}$ for some subring $T$ of $R$.
			Then $P\cap S=\emptyset$ if and only if $P\cap T=\set{0}$.
	\end{enumerate}
\end{lemma}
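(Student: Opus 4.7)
The plan is to handle the three cases separately, each by a short direct argument that exploits the primality of $P$ together with the specific shape of $S$.

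For part (a), the implication $P\cap S=\emptyset\Rightarrow P\cap\{s_1,\ldots,s_n\}=\emptyset$ is immediate from $\{s_1,\ldots,s_n\}\subseteq S$. For the converse I would argue contrapositively: if some element $s_{i_1}\cdots s_{i_k}$ of $S$ (a finite product of generators) lies in $P$, then since $P$ is prime, one of the factors $s_{i_j}$ lies in $P$, contradicting $P\cap\{s_1,\ldots,s_n\}=\emptyset$. The empty product equals $1$, which cannot lie in the proper ideal $P$, so this case causes no trouble.

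Part (b) unfolds by pure set theory: $P\cap S=P\cap(R\setminus\mathfrak{p})=\emptyset$ is literally the statement that every element of $P$ lies in $\mathfrak{p}$, i.e., $P\subseteq\mathfrak{p}$. No use of primality is needed.

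Part (c) is similarly set-theoretic: $P\cap S=P\cap(T\setminus\{0\})=(P\cap T)\setminus\{0\}$. Since $0\in P\cap T$ always, this difference is empty precisely when $P\cap T=\{0\}$. The main (and essentially only) obstacle across all three parts is part (a), where one must invoke primality to pass from a product in $P$ back to a single generator in $P$; the other two parts are purely formal rewritings of the intersection.
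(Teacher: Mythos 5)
Your proof is correct and follows essentially the same route as the paper: parts (b) and (c) are unwound directly from the definitions, and for part (a) one direction is the inclusion $\{s_1,\ldots,s_n\}\subseteq S$ while the other uses primality of $P$ to pass from a product of generators in $P$ to a single generator in $P$, noting as the paper does that the empty product $1$ cannot lie in $P$.
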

\begin{proof}
	The last two claims are obvious from the definitions, so assume $S=\merz{s_1,\ldots,s_n}$.
	If $P\cap S=\emptyset$, then $P\cap\set{s_1,\ldots,s_n}=\emptyset$ since $\set{s_1,\ldots,s_n}\subseteq S$.
	If $s\in P\cap S$, then $s=\prod_{i=1}^{n}s_i^{\nu_i}\in P$ for some $\nu_i\in\IN_0$, where for at least one $j\in\nset{n}$ we have $\nu_j\geq1$ since $1\notin P\cap S$.
	But then $s_j\in P$ since $P$ is prime.
\end{proof}

In polynomial algebras, this enables computations, since ideal membership test and intersection with essential subalgebras  are classical applications of Gr\"obner bases.

With the methods developed so far we can address the important notion of the \myemph{symbolic power} of an ideal (see \cite{SymbPower17} for a modern overview of a vivid area of investigations). Let $I\subseteq R$ be an ideal in a Noetherian domain $R$. Suppose that $\displaystyle I = \bigcap_{i=1}^r Q_i$ is a primary decomposition with associated primes $\mf{p}_i := \sqrt{Q_i}$. Then the $n$-th symbolic power of $I$ is defined to be
\[
I^{(n)}=\bigcap_{i=1}^{r} \;  (R_{\mf{p}_i} I^n \cap R),
\]
where  $R_{\mf{p}_i} = (R\setminus{\mf{p}_i})^{-1} R$ is a  localization of geometric type with respect to the Ore set $S_i := R\setminus \mf{p}_i$. Furthermore, $R_{\mf{p}_i} I^n \cap R$ is exactly the local closure of $I^n$ with respect to $S_i$, which implies the inclusion $I^n\subseteq I^{(n)}$ for all $n$. 
 In \Cref{ex:symbolic_power} we will see that equality does not hold in general.
 In the special case where $I = \mf{p}$ is a prime ideal, the symbolic power $\mf{p}^{(n)}$ is precisely the $\mf{p}$-primary component of $\mf{p}^n$. 

These observations immediately lead to the following \Cref{alg:symbolic_power} as an application of \Cref{alg:decomposition_closure} and utilizing \Cref{alg:geometric_intersection} to decide non-emptiness of the intersections.

\begin{algorithm}
	\caption{\textsc{SymbolicPower}}
	\label{alg:symbolic_power}
	\KwIn{A decomposable ideal $I$ in a commutative domain $R$ and $n\in\IN$.}
	\KwOut{$I^{(n)}$.}
	\Begin{
		compute the associated primes $\mf{p}_1, \ldots, \mf{p}_r$ of $I$\;
		compute a primary decomposition $I^n=\bigcap_{i=1}^{m} Q_i$ with associated primes $\mf{q}_i:=\sqrt{Q_i}$\;
		\ForEach{$i\in\nset{m}$}{
			$\tilde{Q}_i:=R$\;
			\ForEach{$j\in\nset{r}$}{
				\uIf{$\mf{q}_i \subseteq \mf{p}_j$\label{sympower-ideal-cont}}{
				$\tilde{Q}_i:= \tilde{Q}_i \cap Q_i$\;
				}
			}
		}
		\Return{$\tilde{I}:=\bigcap_{i=1}^{m}\tilde{Q}_i$}\;
	}
\end{algorithm}

\begin{proposition}
	\Cref{alg:symbolic_power} terminates and is correct.
\end{proposition}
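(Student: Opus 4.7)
The plan is to chain the closure and intersection lemmas already proved in this section to show that the set returned by the algorithm coincides with the definition of $I^{(n)}$ given just before it. Starting from $I^{(n)}=\bigcap_{j=1}^{r}(I^n)^{S_j}$ with $S_j:=R\setminus\mf{p}_j$, I would substitute the computed primary decomposition $I^n=\bigcap_{i=1}^{m}Q_i$ and apply \Cref{compatibility_of_closure_with_intersections}(b) to obtain
\[
I^{(n)}=\bigcap_{j=1}^{r}\bigcap_{i=1}^{m}Q_i^{S_j}.
\]
By \Cref{closure_of_primary_ideals} each factor equals either $Q_i$ or $R$, depending on whether $Q_i\cap S_j$ is empty; by \Cref{Lemma20} this reduces to $\mf{q}_i\cap S_j=\emptyset$, and by \Cref{Lemma21}(b) this in turn is equivalent to $\mf{q}_i\subseteq\mf{p}_j$. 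Hence
\[
I^{(n)}=\bigcap\set{Q_i\mid i\in\nset{m},\ \exists\,j\in\nset{r}:\mf{q}_i\subseteq\mf{p}_j},
\]
which is exactly the output of the algorithm: each $\tilde Q_i$ is initialized to $R$ and is replaced by $Q_i$ as soon as line~\ref{sympower-ideal-cont} encounters some $\mf{p}_j$ containing $\mf{q}_i$, with later iterations being idempotent.

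For termination I would note that every subroutine is a classical Gr\"obner-based computation in the Noetherian setting (cf.~\cite{GPS08}): the associated primes of $I$, a primary decomposition of $I^n$ together with the radicals $\mf{q}_i=\sqrt{Q_i}$, the $m\cdot r$ ideal containment tests $\mf{q}_i\subseteq\mf{p}_j$, and the final finite sequence of ideal intersections.

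The main subtlety to get right is the book-keeping of the two distinct lists of primes involved: the $\mf{p}_j$ come from $I$ and index the geometric localizations $S_j$ defining $I^{(n)}$, whereas the $\mf{q}_i$ come from the primary factors of $I^n$ and appear only in the containment test. The heart of the proof is the equivalence, assembled from \Cref{closure_of_primary_ideals}, \Cref{Lemma20} and \Cref{Lemma21}(b), that $Q_i$ survives $S_j$-closure precisely when $\mf{q}_i\subseteq\mf{p}_j$. One should also remark that although a primary decomposition of $I^n$ is not unique in its embedded components, the ideal returned is independent of the chosen decomposition, since by the argument above it always equals $I^{(n)}$.
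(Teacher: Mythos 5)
Your proof is correct and follows essentially the same route as the paper's, invoking the same chain of lemmas (\Cref{compatibility_of_closure_with_intersections}, \Cref{closure_of_primary_ideals}, \Cref{Lemma20}, \Cref{Lemma21}) to reduce the $S_j$-closure of each primary factor to the ideal containment test in line~\ref{sympower-ideal-cont}. You spell out more details than the paper (the double intersection $\bigcap_{j}\bigcap_{i}Q_i^{S_j}$, the book-keeping of the two lists of primes, and the independence from the chosen decomposition), which is helpful but not a different argument.
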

\begin{proof}
	Termination is obvious. The containment of ideals in \Cref{sympower-ideal-cont} is equivalent to $Q_i\cap S_j=\emptyset$ via \Cref{Lemma20} and \Cref{Lemma21}.
	If the latter condition fails, $Q_i$ can be ignored, since $Q_{i}^{S_j}=R$ by \Cref{closure_of_primary_ideals}. 
\end{proof}

\begin{example}\label{ex:symbolic_power}
	Perhaps the most popular example for $\mf{p}^{(2)}\neq \mf{p}^2$ is given by
	\[
		\mf{p}
		=\erz{x^4-yz, y^2-xz, x^3y-z^2}\subseteq\mathbb{Q}[x,y,z].
	\]
	In this situation, $\mf{p}^2$ has two associated primes: $\mf{p}$ and $\erz{x,y,z}$.
	Only the $\mf{p}$-primary component survives in the closure, giving $\mf{p}^{2}=\mf{p}^{(2)}\cap\erz{z,y^4,x^3 y^3,x^4 y^2,x^7y,x^8}$, where
	\[
		\mf{p}^{(2)}
		=\erz{y^{4}-2xy^{2}z+x^{2}z^{2}, x^{3}y^{3}-x^{4}yz-y^{2}z^{2}+xz^{3}, x^{4}y^{2}-x^{5}z-y^{3}z+xyz^{2}, x^{7}+x^{2}y^{3}-3x^{3}yz+z^{3}}.
	\]
\end{example}

\section{Central closure of submodules}

A \emph{central closure of a submodule} is the local closure of a submodule with respect to a left Ore set which is contained in the center of the underlying ring.
The goal of this section is to develop algorithms for certain central closures of submodules over $G$-algebras.

\subsection{The class of G-algebras}

Recall that a total ordering $\leq$ on $\IN_0^n$ with least element $0$ is called \emph{admissible} if $\alpha\leq\beta$ implies $\alpha+\gamma\leq\beta+\gamma$ for all $\alpha,\beta,\gamma\in\IN_0^n$.

\begin{definition}
	Let $K$ be a field and $A$ a $K$-algebra generated by $x_1,\ldots,x_n$.
	\begin{itemize}
		\item
			The set of \emph{standard monomials} of $A$ is
			\[
				\Mon(A)
				:=\set{x^\alpha\mid\alpha\in\IN_0^n}
				:=\set{x_1^{\alpha_1}x_2^{\alpha_2}\cdots x_n^{\alpha_n}\mid\alpha_i\in\IN_0}.
			\]
		\item
			Let $\leq$ be an admissible ordering on $\IN_0^n$.
			Any $f\in\leftideal{K}{\Mon(A)}\setminus\set{0}$ has a unique representation $f=\sum_{\alpha\in\IN_0^n}^{}c_\alpha x^\alpha$ for some $c_\alpha\in K$, where $c_\alpha=0$ for almost all $\alpha$, but $c_\alpha\neq0$ for at least one $\alpha$.
			Now we define
			\begin{itemize}
				\item
					$\lexp_\leq(f):=\max_\leq\set{\alpha\in\IN_0^n\mid c_\alpha\neq0}$, the \emph{leading exponent} of $f$ with respect to $\leq$,
				\item
					$\lc_\leq(f):=c_{\lexp_\leq(f)}\in K\setminus\set{0}$, the \emph{leading coefficient} of $f$ with respect to $\leq$,
				\item
					$\lm_\leq(f):=x^{\lexp_\leq(f)}\in\Mon(A)$, the \emph{leading monomial} of $f$ with respect to $\leq$.
			\end{itemize}
	\end{itemize}
\end{definition}

\begin{definition}
	For $n\in\IN$ and $1\leq i<j\leq n$ consider the constants $c_{ij} \in K\setminus\set{0}$ and polynomials $d_{ij}\in K[x_1,\ldots,x_n]$.
	Suppose that there exists an admissible ordering $\leq$ on $\IN_0^n$ such that for any $1\leq i<j\leq n$ either $d_{ij}=0$ or $\lexp_\leq(d_{ij})<\lexp_\leq(x_ix_j)$.
	The $K$-algebra
	\[
		A
		:=K\erz{x_1,\ldots,x_n\mid\set{ x_j x_i = c_{ij} x_i x_j + d_{ij}:1\leq i<j\leq n}}
	\]
	is called a \emph{$G$-algebra} if $\Mon(A)$ is a $K$-basis of $A$.
\end{definition}

$G$-algebras (\cite{LS, lev_diss}) are also known as algebras of solvable type (\cite{KW,Kr,Kredel2015}) and as PBW algebras (\cite{BGV}).
$G$-algebras are left and right Noetherian domains that occur naturally in various situations and encompass algebras of linear functional operators modeling difference and differential equations.

\begin{example}
	Let $K$ be a field, $q_i\in K\setminus\set{0}$ and $n\in\IN$.
	Common $G$-algebras include the following examples, where only the relations between non-commutating variables are listed:
	\begin{itemize}
		\item
			The commutative polynomial ring $K[x_1,\ldots,x_n]$.
		\item
			The $n$-th \emph{Weyl algebra} $A_n:=K\erz{x_1,\ldots,x_n,\partial_1,\ldots,\partial_n}$ with $\partial_ix_i=x_i\partial_i+1$ for all $1\leq i\leq n$.
		\item
			The $n$-th \emph{shift algebra} $S_n:=K\erz{x_1,\ldots,x_n,s_1,\ldots,s_n}$ with $s_ix_i=x_is_i+s_i=(x_i+1)s_i$ for all $1\leq i\leq n$.
		\item
			The $n$-th \emph{$q$-shift algebra} $S^{(q)}_n:=K\erz{x_1,\ldots,x_n,s_1,\ldots,s_n}$ with $s_ix_i=q_i x_is_i$ for all $1\leq i\leq n$.
		\item
			The $n$-th \emph{$q$-Weyl algebra} $A^{(q)}_n:=K\erz{x_1,\ldots,x_n,\partial_1,\ldots,\partial_n}$ with $\partial_ix_i=q_i x_i\partial_i+1$ for all $1\leq i\leq n$.
		\item
			The $n$-th \emph{integration algebra} $K\erz{x_1,\ldots, x_n, I_1,\ldots, I_n}$ with $I_i x_i = x_i I_i + I_i^2$  for all $1\leq i\leq n$.
	\end{itemize}
\end{example}

Furthermore, there exists a well-developed Gr\"obner basis theory for $G$-algebras which is close to the commutative case.

If considered over a field of prime characteristic, Weyl and shift algebras have very big centers.
A similar situation happens if all quantum parameters $q_i$ are roots of unity for $q$-shift and $q$-Weyl algebras.
Then the mentioned algebras are finitely generated modules over their centers and thus enjoy a lot of commutativity.

Note that any admissible ordering $\leq$ on $\IN_0^n$ can be extended to an admissible ordering $\preceq$ on $\nset{r}\times\IN_0^n$, e.g. $\preceq=\POTify{\leq}$:

\begin{definition}
	Let $\leq$ be an admissible ordering on $\IN_0^n$.
	The \emph{(ascending) position-over-term ordering} extending $\leq$ is the ordering $\POTify{\leq}$ on $\nset{r}\times\IN_0^n$, defined via
	\[
		(i,\alpha)\POTify{\leq}(j,\beta)
		\quad:\Leftrightarrow\quad
		i<j\text{ or }(i=j\text{ and }\alpha\leq\beta)
	\]
	for $\alpha,\beta\in\IN_0^n$ and $i,j\in\nset{r}$.
\end{definition}

We recall the definition of Gr\"obner bases and their characterization via left normal forms in preparation for the forthcoming \Cref{alg:CentralEssentialRationalClosure}.

\begin{definition}\label{definition_of_Gr\"obner_bases}
	Let $A$ be a $G$-algebra, $r\in\IN$, $I$ a left $A$-submodule of $A^r$, $G$ a finite subset of $I$ and $\preceq$ an admissible ordering on $\nset{r}\times\IN_0^n$.
	Then $G$ is a \emph{left Gr\"obner basis} of $I$ with respect to $\preceq$ if for all $f\in I\setminus\set{0}$ there exists $g\in G$ such that $\lm_\preceq(g)\mid\lm_\preceq(f)$.
\end{definition}

\begin{algorithm}
	\caption{\textsc{LeftNF}}
	\label{alg_LeftNF}
	\KwIn{A $G$-algebra $A$ generated by variables $x=\set{x_1,\ldots,x_n}$, $r\in\IN$, $f\in A^r$, a finite subset $G$ of $A^r$, an admissible ordering $\preceq$ on $\nset{r}\times\IN_0^n$.}
	\KwOut{$\LeftNF_\preceq(f|G)$.}
	\Begin{
		$h:=f$\;
		\While{$h\neq0$
		and $G_h:=\set{g\in G:\lm_\preceq(g)\mid\lm_\preceq(h)}\neq\emptyset$}{
			choose $g\in G_h$\;
			$(i,\alpha):=\lexp_\preceq(h)$\;
			$(i,\beta):=\lexp_\preceq(g)$\;
			$h:=h-\frac{\lc_\preceq(h)}{\lc_\preceq(x^{\alpha-\beta}g)}x^{\alpha-\beta}g$\;
		}
		\Return{$h$}\;
	}
\end{algorithm}

\begin{theorem}\label{characterization_of_Gr\"obner_bases_in_terms_of_NF}
	In the situation of \Cref{definition_of_Gr\"obner_bases} the following are equivalent:
	\begin{enumerate}[(1)]
		\item
			$G$ is a left Gr\"obner basis of $I$ with respect to $\preceq$.
		\item
			$\LeftNF_{\preceq}(f|G)=0$ for all $f\in I$.
	\end{enumerate}
\end{theorem}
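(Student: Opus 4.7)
The plan is to prove the two directions separately by analyzing the behavior of \Cref{alg_LeftNF}.

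First I would establish that \Cref{alg_LeftNF} actually terminates on any input, since this is implicit in the statement (the normal form must exist as a well-defined output). The key fact is that in a $G$-algebra, the defining relations together with admissibility of $\leq$ imply $\lexp_\preceq(x^{\gamma}g) = \gamma + \lexp_\preceq(g)$ (in the appropriate component) with a nonzero leading coefficient, for any $g \in A^r \setminus \set{0}$ and $\gamma \in \IN_0^n$. Applied to the update step $h \leftarrow h - \tfrac{\lc_\preceq(h)}{\lc_\preceq(x^{\alpha-\beta}g)} x^{\alpha-\beta} g$, one checks that the leading term of $h$ is exactly cancelled, so $\lm_\preceq(h)$ strictly decreases in each iteration. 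Since $\preceq$ is a well-ordering (admissible orderings on $\IN_0^n$ extended via $\POTify{\leq}$ are well-orderings by Dickson's lemma), the loop must terminate.

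For the implication $(1) \Rightarrow (2)$, I would fix $f \in I$ and trace the intermediate values of $h$ through the loop. An easy invariant is $h \in I$: initially $h = f \in I$, and each update subtracts a left-multiple of some $g \in G \subseteq I$ from $h$, preserving membership. Upon termination, either $h = 0$ (and we are done) or $G_h = \emptyset$. In the latter case $h \in I \setminus \set{0}$, so the Gr\"obner basis hypothesis yields some $g \in G$ with $\lm_\preceq(g) \mid \lm_\preceq(h)$, contradicting $G_h = \emptyset$. Hence $\LeftNF_\preceq(f|G) = 0$.

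For the converse $(2) \Rightarrow (1)$, take any $f \in I \setminus \set{0}$. By hypothesis $\LeftNF_\preceq(f|G) = 0 \neq f$, so the while-loop was entered at least once. Its very first iteration requires $G_f \neq \emptyset$, which directly provides an element $g \in G$ with $\lm_\preceq(g) \mid \lm_\preceq(f)$, verifying the divisibility condition in \Cref{definition_of_Gr\"obner_bases}.

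The only subtle step is the termination argument, since everything else is essentially bookkeeping. The subtlety there lies in the non-commutative multiplication: one must invoke the $G$-algebra axioms (the bound $\lexp_\leq(d_{ij}) < \lexp_\leq(x_i x_j)$) to conclude that $\lexp_\preceq(x^{\alpha-\beta} g) = \alpha$ exactly, so that the reduction cancels the leading term rather than merely perturbing it.
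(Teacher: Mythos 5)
The paper states this theorem without proof, deferring to the references \cite{lev_diss} and \cite{BGV}, so there is no internal argument to compare against. Your proof is correct and follows the standard textbook treatment: you correctly identify that the multiplicativity of leading exponents in a $G$-algebra (enforced by the $\lexp_\leq(d_{ij}) < \lexp_\leq(x_i x_j)$ condition) guarantees exact cancellation of the leading term in each reduction step, so the leading monomial strictly decreases along a well-ordering and the loop terminates; the invariant $h \in I$ then yields $(1)\Rightarrow(2)$ by contradiction with $G_h = \emptyset$ at a nonzero terminal $h$, and $(2)\Rightarrow(1)$ follows because entering the loop at all produces a divisor of $\lm_\preceq(f)$. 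This is precisely the argument one would find in the cited sources.
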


Further information can be found in e.~g. \cite{lev_diss} and \cite{BGV}.

\subsection{Central saturation}

Denote the center of a ring $R$ by $\central{R}$.
Its elements are called \emph{central}.

\begin{definition}\label{def_central_saturation}
	Let $R$ be a ring, $q\in\central{R}$, $k\in\IN$ and $I$ a left $R$-submodule of $R^k$.
	\begin{itemize}
		\item
			The \emph{(central) quotient} of $I$ by $q$ is the left $R$-submodule
			\[
				I:q
				:=\set{f\in R^k\mid qf\in I}
				=\set{f\in R^k\mid fq\in I}.
			\]
		\item
			The \emph{central saturation} of $I$ by $q$ is the left $R$-submodule
			\[
				I:q^\infty
				:=\bigcup_{i\in\IN_0}(I:q^i)
				=\set{f\in R^k\mid\exists~n\in\IN_0:q^nf\in I}.
			\]
		\item
			The \emph{(central) saturation index} of $I$ by $q$ is
			\[
				\operatorname{Satindex}(I,q)
				:=\min(\set{n\in\IN_0\mid(I:q^\infty)=(I:q^n)}\cup\set{\infty}).
			\]
	\end{itemize}
\end{definition}

These saturations themselves are special cases of (generalized) left saturation closures, since $I:q=\LSat_{\set{q}}^{R^k}(I)$ and $I:q^\infty=\LSat_{\merz{q}}^{R^k}(I)=I^{\merz{q}}$ (by extending \Cref{def_LSat} to arbitrary sets $S$).

\begin{remark}
	In the situation of \Cref{def_central_saturation}, consider the left $R$-module homomorphism $\phi:R^k\rightarrow R^k/I,~f\mapsto fq+I$.
	We have
	\[
		\ker(\phi)
		=\set{f\in R^k\mid qf+ I=\phi(f)=0+I}
		=I:q.
	\]
	Thus, if we can compute kernels of such left $R$-module homomorphisms, we can also compute central quotients.
	Furthermore, if we can decide equality of left $R$-modules, then we can also compute the central saturation iteratively, provided the saturation index is finite.
	The latter is always the case for Noetherian rings.
\end{remark}

In particular, we have the following result for finitely generated monoidal central closures:

\begin{remark}\label{monoidal_closure_in_G-algebras}
	Let $S=\merz{f_1,\ldots,f_k}$ be a left Ore set in a $G$-algebra $A$, $I$ a left ideal in $A$ and $z\in\central{A}\cap S$.
	Then $I^{\merz{z}}=\LSat_{\merz{q}}(I)=I:q^\infty$ is computable.
	Since $\merz{z}\subseteq S$ we have $I^{\merz{z}}\subseteq I^S$.
	The other inclusion holds if $\LSat(S)=\LSat(\merz{z})$, which is equivalent to $f_j\in\LSat(\merz{z})$ for all $j$.
	A sufficient condition for this is that $f_1,\ldots,f_k$ commute pairwise and $z$ is a multiple of $f_1\cdot\ldots\cdot f_k$.
	This also includes the special case where $f_1,\ldots,f_k\in\central{A}$.
\end{remark}

The restriction to central $q$ allows straightforward generalizations of classical commutative results, e.g. regarding the decomposition of ideals:

\begin{lemma}\label{saturation_leads_to_decomposition}
	Let $I$ be a left ideal in a ring $R$ and $q\in\central{R}$.
	If $n:=\operatorname{Satindex}(I,q)<\infty$, then $I=\leftideal{R}{I,q^n}\cap(I:q^n)$.
\end{lemma}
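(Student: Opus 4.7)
The plan is to show the two inclusions directly, using centrality of $q$ to move powers of $q$ freely and using the definition of the saturation index to collapse the chain $(I:q^i)$ at the crucial step.

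For the inclusion $I\subseteq\leftideal{R}{I,q^n}\cap(I:q^n)$, the containment $I\subseteq\leftideal{R}{I,q^n}$ is immediate, and $I\subseteq(I:q^n)$ follows because $q^n\in\central{R}$ implies $q^nf=fq^n\in I$ for every $f\in I$ (since $I$ is a left ideal).

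For the reverse inclusion, I would take $f\in\leftideal{R}{I,q^n}\cap(I:q^n)$ and write $f=a+rq^n$ for some $a\in I$ and $r\in R$. Applying $q^n$ from the left and using centrality gives
\[
    q^n f = q^n a + q^n r q^n = q^n a + r q^{2n}.
\]
Since $f\in(I:q^n)$ we have $q^nf\in I$, and $q^na\in I$ because $a\in I$, hence $rq^{2n}\in I$. This means precisely that $r\in(I:q^{2n})$.

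The key (and only nontrivial) step is now to observe that $(I:q^{2n})=(I:q^n)$. This follows from $\operatorname{Satindex}(I,q)=n$: by definition $(I:q^n)=(I:q^\infty)=\bigcup_{i\in\IN_0}(I:q^i)$, and since $(I:q^{2n})\subseteq(I:q^\infty)=(I:q^n)\subseteq(I:q^{2n})$, equality holds. Therefore $rq^n\in I$, and consequently $f=a+rq^n\in I$, which completes the proof. The main obstacle — if there is one — is simply being careful that the ascending chain $(I:q^i)$ has already stabilized at index $n$, so that the $q^{2n}$ produced by the centrality computation can be traded back for $q^n$.
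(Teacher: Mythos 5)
Your proof is correct and follows essentially the same route as the paper's: both decompose an element of $\leftideal{R}{I,q^n}\cap(I:q^n)$ as $a+rq^n$, use centrality of $q$ to derive $rq^{2n}\in I$, and then invoke the stabilization $(I:q^{2n})=(I:q^n)$ guaranteed by $\operatorname{Satindex}(I,q)=n$ to conclude $rq^n\in I$. The only cosmetic difference is that you multiply the whole element $f$ by $q^n$ and then subtract, whereas the paper directly computes $q^{2n}r=q^n(a-b)$; the content is identical.
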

\begin{proof}
	Let $J:=\leftideal{R}{I,q^n}\cap(I:q^n)$.
	Since $I\subseteq\leftideal{R}{I,q^n}$ and $I\subseteq(I:q^n)$ we clearly have $I\subseteq J$.
	On the other hand, let $a\in J$, then $q^na\in I$ (since $a\in(I:q^n)$) and $a=b+rq^n$ for some $b\in I$ and $r\in R$ (since $a\in\leftideal{R}{I,q^n}$).
	Now
	\[
		q^{2n}r
		=q^nrq^n
		=q^n(a-b)
		=q^na-q^nb
		\in I
	\]
	shows that $r\in(I:q^{2n})=(I:q^n)$, which implies $rq^n=q^nr\in I$, thus $a=b+rq^n\in I$.
\end{proof}

\subsection{Antiblock orderings}

In preparation for the upcoming \Cref{alg:CentralEssentialRationalClosure} we need the notion of \emph{antiblock orderings} as well as some basic results which are included here for the sake of completeness.

Let $n,m,r\in\IN$.

\begin{definition}
	Let $\leq$ be an admissible ordering on $\IN_0^n$.
	The \emph{(ascending) position-over-term ordering} extending $\leq$ is the ordering $\POTify{\leq}$ on $\nset{r}\times\IN_0^n$, defined via
	\[
		(i,\alpha)\POTify{\leq}(j,\beta)
		\quad:\Leftrightarrow\quad
		i<j\text{ or }(i=j\text{ and }\alpha\leq\beta)
	\]
	for $\alpha,\beta\in\IN_0^n$ and $i,j\in\nset{r}$.
\end{definition}

\begin{lemma}\label{ordering_POT_induces_ordering_in_second_block}
	Let ${\leq}=(\leq_1,\leq_2)$ be an $(n,m)$-antiblock ordering on $\IN_0^{n+m}\cong\IN_0^n\times\IN_0^m$ and ${\preceq}:=\POTify{\leq}$.
	Let $(\alpha_1,\alpha_2),(\beta_1,\beta_2)\in\IN_0^{n+m}$ and $i,j\in\nset{r}$ such that we have $(i,(\alpha_1,\alpha_2))\preceq(j,(\beta_1,\beta_2))$, then $(i,\alpha_2)\POTify{\leq_2}(j,\beta_2)$.
\end{lemma}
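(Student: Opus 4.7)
My plan is a direct case analysis on the two clauses in the definition of the position-over-term ordering, after unwinding the defining property of antiblock orderings.

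First I would recall that, by definition of ${\preceq}=\POTify{\leq}$, the hypothesis $(i,(\alpha_1,\alpha_2))\preceq(j,(\beta_1,\beta_2))$ is equivalent to the disjunction: either $i<j$, or $i=j$ together with $(\alpha_1,\alpha_2)\leq(\beta_1,\beta_2)$ in the antiblock ordering. I would treat each alternative separately and show that it forces $(i,\alpha_2)\POTify{\leq_2}(j,\beta_2)$.

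In the first case, $i<j$ alone yields $(i,\alpha_2)\POTify{\leq_2}(j,\beta_2)$ by the first clause of the POT definition, with no further work needed. In the second case, $i=j$ holds, so it suffices to establish $\alpha_2\leq_2\beta_2$ in order to conclude via the second clause. Here I would invoke the defining property of an $(n,m)$-antiblock ordering, in which the second block $\leq_2$ has priority: $(\alpha_1,\alpha_2)\leq(\beta_1,\beta_2)$ holds if and only if either $\alpha_2<_2\beta_2$, or $\alpha_2=_2\beta_2$ and $\alpha_1\leq_1\beta_1$. Both sub-cases immediately yield $\alpha_2\leq_2\beta_2$, finishing this case.

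There is no genuine obstacle: the argument is pure definition chasing and does not use any Gr\"obner basis machinery. The only point that requires care is keeping straight that the prefix \emph{anti}block signals that the second block of exponents dominates the comparison (in contrast to a standard block ordering, where the first block dominates); this is precisely what makes the projection to the second block order-preserving, and hence what makes the lemma work.
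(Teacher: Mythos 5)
Your proof is correct and follows essentially the same route as the paper's: unwind the POT definition into the two alternatives, apply the defining projection property of the antiblock ordering ($(\alpha_1,\alpha_2)\leq(\beta_1,\beta_2)\Rightarrow\alpha_2\leq_2\beta_2$) in the $i=j$ case, and reassemble via the POT definition for $\leq_2$. The paper presents it as a single chain of equivalences and one implication rather than an explicit case split, but the content is identical.
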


\begin{proof}
	We have
	\[\begin{split}
		\quad(i,(\alpha_1,\alpha_2))\preceq(j,(\beta_1,\beta_2))
		\quad
		\Leftrightarrow&\quad(i,(\alpha_1,\alpha_2))\POTify{\leq}(j,(\beta_1,\beta_2))\\
		\Leftrightarrow&\quad i<j\text{ or }(i=j\text{ and }(\alpha_1,\alpha_2)\leq(\beta_1,\beta_2))\\
		\Rightarrow&\quad i<j\text{ or }(i=j\text{ and }\alpha_2\leq_2\beta_2)\\
		\Leftrightarrow&\quad(i,\alpha_2)\POTify{\leq_2}(j,\beta_2).\qedhere
	\end{split}\]
\end{proof}

\subsection{Central essential rational closure}\label{CERC_setting}

In this section, let $K$ be a field, $n,m,r\in\IN$ and $A$ a $G$-algebra over $K$ generated by two blocks of variables $x=\set{x_1,\ldots,x_n}$ and $y=\set{y_1,\ldots,y_m}$ such that $x$ generates a sub-$G$-algebra $B$ of $A$ with $B\subseteq\central{A}$.
Then $S:=B\setminus\set{0}$ is a left Ore set in $B$ as well as in $A$ since it is a multiplicative set consisting of central elements.
Furthermore, let ${\leq}=(\leq_1,\leq_2)$ be an $(n,m)$-antiblock ordering satisfying the ordering condition for $G$-algebras on $A$ and ${\preceq}:=\POTify{\leq}$.
Finally, let $\varepsilon:=\varepsilon_{S,A,A^r}$, $\rho:=\rho_{S,A}$ and ${\preceq_2}=\POTify{\leq_2}$.
Observe the following:

\begin{itemize}
	\item
		We have $\ker(\varepsilon)=\set{m\in A^r\mid\exists~s\in S:sm=0}=\set{0}$ since $A$ is a domain, thus $\varepsilon$ is injective.
	\item
		Since $B\subseteq\central{A}$ we can identify the subring $B$ with the commutative polynomial ring $K[x]=K[x_1,\ldots,x_n]$.
		Then we have $S\inv B\cong K(x)$.
	\item
		We can view $S\inv A$ as a $G$-algebra over the field $K(x)$ in the variables $y_1,\ldots,y_m$ with the relations inherited from $A$, thus the Gr\"obner basis theory of $G$-algebras applies.
	\item
		The monomials in the module $S\inv(A^r)\cong(S\inv A)^r$ are of the form $\varepsilon(y^\alpha e_i)=\rho(y^\alpha)\varepsilon(e_i)$.
		Let $(s,f)\in S\inv A^r$, then $(s,f)$ and $(1,f)=\varepsilon(f)$ have the same leading exponent and the same leading monomial with respect to $\preceq_2$.
\end{itemize}

\begin{lemma}\label{ordering_POT_inheritance_of_leading_exponent_in_second_block}
	Let $f\in A^r\setminus\set{0}$ and $\lexp_{\preceq}(f)=(i,(\alpha_1,\alpha_2))$, then $\lexp_{\preceq_2}(\varepsilon(f))=(i,\alpha_2)$.
\end{lemma}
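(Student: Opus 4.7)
The plan is to expand $f$ in the standard $K$-basis of $A^r$, push it through $\varepsilon$, and reinterpret the result as an element of the $G$-algebra $(S\inv A)^r$ over the field $K(x)\cong S\inv B$. Since $B\subseteq\central{A}$, the variables $x_1,\ldots,x_n$ commute with everything, so I can write $f=\sum_{j,\beta_1,\beta_2} c_{j,\beta_1,\beta_2}\, x^{\beta_1} y^{\beta_2} e_j$ with $c_{j,\beta_1,\beta_2}\in K$, using $\Mon(A)=\{x^{\beta_1}y^{\beta_2}\}$. Under $\varepsilon$ the factors $x^{\beta_1}$ become $K(x)$-scalars, so regrouping yields
\[
\varepsilon(f)\;=\;\sum_{j,\beta_2} p_{j,\beta_2}\;\rho(y^{\beta_2})\varepsilon(e_j),\qquad p_{j,\beta_2}:=\sum_{\beta_1} c_{j,\beta_1,\beta_2}\, x^{\beta_1}\in K[x]\subseteq K(x).
\]
The standard monomials of $(S\inv A)^r$ with respect to $\preceq_2$ are precisely the $\rho(y^{\beta_2})\varepsilon(e_j)$, so $\lexp_{\preceq_2}(\varepsilon(f))$ is the $\preceq_2$-maximum of the support $\{(j,\beta_2):p_{j,\beta_2}\neq 0\}$.

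Next I would show that this maximum is exactly $(i,\alpha_2)$. For $\preceq_2$-maximality, suppose $p_{j,\beta_2}\neq 0$; then at least one $c_{j,\beta_1,\beta_2}$ is nonzero, so $(j,(\beta_1,\beta_2))$ lies in the support of $f$ and therefore satisfies $(j,(\beta_1,\beta_2))\preceq (i,(\alpha_1,\alpha_2))$. Invoking \Cref{ordering_POT_induces_ordering_in_second_block} yields $(j,\beta_2)\preceq_2 (i,\alpha_2)$. To confirm that $(i,\alpha_2)$ actually occurs in the support, I need $p_{i,\alpha_2}\neq 0$, which follows because $c_{i,\alpha_1,\alpha_2}\neq 0$ contributes the summand $c_{i,\alpha_1,\alpha_2}x^{\alpha_1}$, and distinct monomials $x^{\beta_1}$ are $K$-linearly independent in $K[x]$, hence also in $K(x)$.

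The only real subtlety I anticipate is the bookkeeping on monomial bases: one must be sure that $\Mon(A)$ factors as $\{x^{\beta_1}y^{\beta_2}\}$, that $\varepsilon$ identifies $x^{\beta_1}$ with an element of $K(x)$, and that the family $\{\rho(y^{\beta_2})\varepsilon(e_j)\}$ forms a $K(x)$-basis of the free $(S\inv A)$-module $(S\inv A)^r$. All three are guaranteed by the setup in \Cref{CERC_setting}: the antiblock ordering is compatible with the $G$-algebra structure of $A$, centrality of $B$ means the identification $\rho(x^{\beta_1})\leftrightarrow x^{\beta_1}\in K(x)$ is consistent with the $G$-algebra multiplication on $S\inv A$, and $S\inv A$ is again a $G$-algebra over $K(x)$ in the $y$-variables. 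Once these are in place, the argument is a direct combination of the regrouping identity above with \Cref{ordering_POT_induces_ordering_in_second_block}.
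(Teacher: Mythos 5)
Your proof is correct and follows essentially the same route as the paper's: expand $f$ in the standard basis, collect the $x$-part of each term into a coefficient in $K[x]\subseteq K(x)$ (your $p_{j,\beta_2}$, the paper's $\tilde{c}_{(j,\gamma)}$), observe that $\varepsilon(f)$ is then in standard form over $K(x)$, and apply \Cref{ordering_POT_induces_ordering_in_second_block} to each nonzero term. The one place you are slightly more explicit than the paper is in verifying that $(i,\alpha_2)$ actually lies in the support of $\varepsilon(f)$ (via $c_{i,\alpha_1,\alpha_2}\neq 0$ and linear independence of the $x^{\beta_1}$), a step the paper leaves implicit; that is a welcome addition rather than a divergence.
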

\begin{proof}
	Define $\compset{r}:=\set{1,\ldots,r}$ and let
	\[
		f
		=\sum_{(j,(\beta,\gamma))\in\ModExp{r}{n+m}}^{}c_{(j,(\beta,\gamma))}x^\beta y^\gamma e_j
		=\sum_{(j,\gamma)\in\ModExp{r}{m}}^{}\underbrace{\left(\sum_{\beta\in\IN_0^n}^{}c_{(j,(\beta,\gamma))}x^\beta\right)}_{\eqqcolon\tilde{c}_{(j,\gamma)}\in K[x]}y^\gamma e_j
	\]
	with $c_{(j,(\beta,\gamma))}\in K$, then $(j,(\beta,\gamma))\preceq(i,(\alpha_1,\alpha_2))=\lexp_\preceq(f)$ whenever $c_{(j,(\beta,\gamma))}\neq0$.
	Furthermore,
	\[
		\varepsilon(f)
		=\varepsilon\left(\sum_{(j,\gamma)\in\ModExp{r}{m}}^{}\tilde{c}_{(j,\gamma)}y^\gamma e_j\right)
		=\sum_{(j,\gamma)\in\ModExp{r}{m}}^{}\varepsilon(\tilde{c}_{(j,\gamma)}y^\gamma e_j)
		=\sum_{(j,\gamma)\in\ModExp{r}{m}}^{}\rho(\tilde{c}_{j,\gamma})\cdot\varepsilon(y^\gamma e_j)
	\]
	implies that it suffices to show that $(j,\gamma)\preceq_2(i,\alpha_2)$ whenever $\tilde{c}_{(j,\gamma)}\neq0$.
	The last condition implies that there is some $\beta\in\IN_0^n$ such that $c_{(j,(\beta,\gamma))}\neq0$.
	Now $(j,(\beta,\gamma))\preceq(i,(\alpha_1,\alpha_2))=\lexp_\preceq(f)$ implies $(j,\gamma)\preceq_2(i,\alpha_2)$ by \Cref{ordering_POT_induces_ordering_in_second_block}, thus $\lexp_{\preceq_2}(\varepsilon(f))=(i,\alpha_2)$.
\end{proof}

\begin{proposition}\label{ordering_inheritance_of_Gr\"obner_basis_property}
	Let $I$ be a left $A$-submodule of $A^r$ and $G$ a left Gr\"obner basis of $I$ with respect to $\preceq$.
	Then $\varepsilon(G)$ is a left Gr\"obner basis of $J:=S\inv I$ with respect to ${\preceq_2}=\POTify{\leq_2}$.
\end{proposition}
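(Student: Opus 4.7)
The plan is to verify the characterization of a left Gröbner basis from \Cref{definition_of_Gr\"obner_bases} directly: given an arbitrary $f \in J \setminus \set{0}$, I will exhibit some $g \in G$ with $\lm_{\preceq_2}(\varepsilon(g)) \mid \lm_{\preceq_2}(f)$.

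First, I would put $f$ into a workable form. Since $f \in S\inv I$, \Cref{elementary_tensors} allows me to write $f = \rho(s)\inv \varepsilon(m)$ for some $s \in S$ and $m \in I$. Because $A$ is a domain, the map $\varepsilon$ is injective (as recorded in the setup preceding the proposition), so $f \neq 0$ forces $m \neq 0$.

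Next, I would invoke the $G$-algebra viewpoint emphasised in the setup: $S\inv A$ is itself a $G$-algebra over the field $S\inv B \cong K(x)$ in the variables $y_1, \ldots, y_m$, with relations inherited from $A$. Hence every element of the free module $(S\inv A)^r$ admits a unique $K(x)$-linear expansion in the standard monomials $\rho(y^\gamma) \varepsilon(e_i)$, and its leading monomial with respect to $\preceq_2$ is simply read off from this expansion. Since $\rho(s)\inv$ lies in the coefficient field $K(x)$ and is nonzero, left multiplication by it merely rescales coefficients, so
\[
	\lm_{\preceq_2}(f) = \lm_{\preceq_2}(\rho(s)\inv \varepsilon(m)) = \lm_{\preceq_2}(\varepsilon(m)).
\]

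The final step would exploit the Gröbner basis hypothesis on $G$ in $A^r$: since $m \in I \setminus \set{0}$, there is some $g \in G$ with $\lm_\preceq(g) \mid \lm_\preceq(m)$. Writing $\lexp_\preceq(g) = (i,(\beta_1,\beta_2))$ and $\lexp_\preceq(m) = (i,(\alpha_1,\alpha_2))$, divisibility of module monomials gives $\beta_1 \leq \alpha_1$ and $\beta_2 \leq \alpha_2$ componentwise. Applying \Cref{ordering_POT_inheritance_of_leading_exponent_in_second_block} to both $g$ and $m$ then yields $\lexp_{\preceq_2}(\varepsilon(g)) = (i,\beta_2)$ and $\lexp_{\preceq_2}(\varepsilon(m)) = (i,\alpha_2)$; the componentwise inequality $\beta_2 \leq \alpha_2$ within the same component produces the desired $\lm_{\preceq_2}(\varepsilon(g)) \mid \lm_{\preceq_2}(\varepsilon(m)) = \lm_{\preceq_2}(f)$.

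The only step that needs genuine care is the reduction $\lm_{\preceq_2}(\rho(s)\inv \varepsilon(m)) = \lm_{\preceq_2}(\varepsilon(m))$, and it relies essentially on the hypothesis $B \subseteq \central{A}$: the centrality of $S$ is precisely what promotes $\rho(s)\inv$ to an invertible element of the coefficient field $K(x)$, rather than an element that could shift the leading monomial among the $y$-standard monomials. Once this centrality is exploited, the rest is bookkeeping routed through \Cref{ordering_POT_inheritance_of_leading_exponent_in_second_block}.
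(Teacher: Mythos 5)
Your proof is correct and follows essentially the same route as the paper's: reduce a nonzero element of $S\inv I$ to the form $(s,m)$ with $m\in I\setminus\set{0}$, apply the Gröbner basis hypothesis to $m$, and push the divisibility of leading monomials through \Cref{ordering_POT_inheritance_of_leading_exponent_in_second_block}. The only difference is cosmetic: you spell out why left-multiplication by $\rho(s)\inv\in K(x)$ does not move the $\preceq_2$-leading monomial, whereas the paper records that observation as a bullet point in the setup preceding the proposition and then uses it silently.
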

\begin{proof}
	Let $z\in J\setminus\set{0}$, then $z=(s,f)$ for some $s\in S$ and $f\in I$.
	Since $G$ is a left Gr\"obner basis of $I$ there exists $g\in G$ such that $\lm_\preceq(g)\mid\lm_\preceq(f)$.
	In terms of leading exponents, where $(i,(\alpha_1,\alpha_2))=\lexp_\preceq(g)$ and $(j,(\beta_1,\beta_2))=\lexp_\preceq(f)$, this means $i=j$ and $(\alpha_1,\alpha_2)\leq(\beta_1,\beta_2)$, in particular, we have $\alpha_2\leq_2\beta_2$.
	Since $\lexp_{\preceq_2}(\varepsilon(g))=(i,\alpha_2)$ and $\lexp_{\preceq}((s,f))=\lexp_{\preceq_2}(\varepsilon(f))=(j,\beta_2)$ by the previous \Cref{ordering_POT_inheritance_of_leading_exponent_in_second_block}, we have $\lm_{\preceq_2}(\varepsilon(g))\mid\lm_{\preceq_2}(z)$.
\end{proof}

\begin{definition}
	Consider a polynomial $f\in K[x]\setminus K$.
	Since $K[x]$ is a unique factorization domain, $f$ has a representation as a product of finitely many irreducible elements.
	The \emph{square-free part} of $f$, denoted $\sqrt{f}$, is the product of all unique irreducible elements up to associativity that occur in this factorization.
\end{definition}

\begin{remark}
	\Cref{alg:CentralEssentialRationalClosure} is based on its commutative special case which can be found in \cite{BW93}, Table 8.8, as algorithm \textsc{ExtCont}.
\end{remark}

\begin{algorithm}
	\caption{\textsc{CentralEssentialRationalClosure}}
	\label{alg:CentralEssentialRationalClosure}
	\KwIn{A left $A$-submodule $I$ of $A^r$.}
	\KwOut{A left Gr\"obner basis $G\subseteq A^r$ of $I^S$ with respect to $\preceq$.}
	\Begin{
		$H:=\textsc{LeftGr\"obnerBasis}(I,\preceq)$\;
		$h:=\sqrt{\prod_{g\in H}^{}\lc_{\preceq_2}(\varepsilon(g))}\in K[x]\setminus\set{0}$\;
		$k:=\operatorname{Satindex}(I,h)$\;
		$G:=\textsc{LeftGr\"obnerBasis}(I:h^k,\preceq)$\;
		\Return{$G$}\;
	}
\end{algorithm}

In the situation of \Cref{alg:CentralEssentialRationalClosure}, the candidate $h$ is constructed such that for any $g\in H$ there exists $\ell\in\IN$ satisfying $\lc_{\preceq_2}(\varepsilon(g))\mid h^\ell$.

\begin{proposition}
	\Cref{alg:CentralEssentialRationalClosure} terminates and is correct.
\end{proposition}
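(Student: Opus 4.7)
The plan is to verify termination, then prove the set-theoretic equality $I^S=I:h^k$, from which the Gr\"obner basis property of the returned $G$ follows immediately. Termination is straightforward: both invocations of \textsc{LeftGr\"obnerBasis} terminate by the standard theory for $G$-algebras, and $\operatorname{Satindex}(I,h)<\infty$ because $A$ is left Noetherian, so the ascending chain $(I:h)\subseteq(I:h^2)\subseteq\cdots$ stabilizes. The easy inclusion $I:h^k\subseteq I^S$ follows because $h\in K[x]\setminus\set{0}\subseteq S$, so any $f$ with $h^kf\in I$ lies in $\LSat_S^{A^r}(I)=I^S$.

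For the hard inclusion $I^S\subseteq I:h^k$, let $f\in I^S$, so that $\varepsilon(f)\in S\inv I$. By \Cref{ordering_inheritance_of_Gr\"obner_basis_property}, $\varepsilon(H)$ is a left Gr\"obner basis of $S\inv I$ with respect to $\preceq_2$, hence \Cref{characterization_of_Gr\"obner_bases_in_terms_of_NF} yields $\LeftNF_{\preceq_2}(\varepsilon(f)|\varepsilon(H))=0$. Tracking the coefficients produced by \Cref{alg_LeftNF} yields an expression
\[
\varepsilon(f)=\sum_{g\in H}q_g\,\varepsilon(g)
\]
with $q_g\in S\inv A$, where the only denominators arising are, up to nonzero scalars from $K$ coming from the commutation of the $y$-variables, products of the leading coefficients $\lc_{\preceq_2}(\varepsilon(g'))$ for $g'\in H$. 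By \Cref{ordering_POT_inheritance_of_leading_exponent_in_second_block} each such leading coefficient lies in $K[x]\setminus\set{0}$. Since $h$ is the square-free part of $\prod_{g\in H}\lc_{\preceq_2}(\varepsilon(g))$, unique factorization in $K[x]$ guarantees that every $\lc_{\preceq_2}(\varepsilon(g'))$ divides $h^\ell$ for some $\ell\in\IN$. The reduction involves only finitely many steps, so we can pick a single $N\in\IN$ with $h^Nq_g\in A$ for every $g\in H$. Centrality of $h$ and injectivity of $\varepsilon$ then give
\[
h^Nf=\sum_{g\in H}(h^Nq_g)\,g\in I,
\]
so $f\in I:h^N\subseteq I:h^\infty=I:h^k$.

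The main obstacle is the denominator-tracking argument: one must verify that left normal form reduction inside $S\inv A$ introduces only leading coefficients of $\varepsilon(g)$ as denominators, and that these lie in the UFD $K[x]$. Centrality of $B$ is essential at two points. First, it lets $h$ commute freely with all other factors so that $h^N\varepsilon(f)=\varepsilon(h^Nf)$. Second, via \Cref{ordering_POT_inheritance_of_leading_exponent_in_second_block} it forces the leading $\preceq_2$-coefficients of elements of $A^r$ to be elements of $K[x]$ rather than arbitrary rational functions in $K(x)$, which is what makes the square-free part $h$ meaningful.
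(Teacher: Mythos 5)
Your proof is correct and takes essentially the same approach as the paper: both reduce to showing $I^S = I:h^k$, both use \Cref{ordering_inheritance_of_Gr\"obner_basis_property} and \Cref{characterization_of_Gr\"obner_bases_in_terms_of_NF}, and both rest on the central observation that the denominators produced by left normal form reduction of $\varepsilon(f)$ against $\varepsilon(H)$ are, up to nonzero scalars, products of the $\lc_{\preceq_2}(\varepsilon(g))$ and hence divide a power of $h$. The only difference is organizational: the paper verifies the denominator-tracking claim by an explicit induction on the number of reduction steps (multiplying by a power of $h$ at each step), whereas you clear all denominators globally at the end — the induction is precisely the rigorous version of the claim you correctly identify as the ``main obstacle.''
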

\begin{proof}
	The saturation index computation is finite since all $G$-algebras are Noetherian, thus termination of the whole algorithm is ensured.
	To prove correctness we have to show that $I^S=I:h^k$.
	
	First, let $f\in I:h^k$, then $h^kf\in I$ and $\varepsilon(f)=(1,f)=(h^k,h^kf)\in S\inv I$.
	Thus we have $f\in\varepsilon\inv(S\inv I)=I^S$, which implies $I:h^k\subseteq I^S$.
	
	For the other inclusion, let $f\in I^S=\varepsilon\inv(S\inv I)$, then $\varepsilon(f)\in S\inv I$.
	Now $\varepsilon(H)$ is a left Gr\"obner basis of $S\inv I$ with respect to $\preceq_2$ by \Cref{ordering_inheritance_of_Gr\"obner_basis_property}.
	Furthermore, \Cref{characterization_of_Gr\"obner_bases_in_terms_of_NF} implies that $\LeftNF(\varepsilon(f)|\varepsilon(H))=0$.
	We now prove $f\in I:h^k$ by an induction on the minimal number $N\in\IN$ of steps necessary in the left normal form algorithm given in \Cref{alg_LeftNF} to reduce $\varepsilon(f)$ to zero:
	\begin{description}[leftmargin=0cm]
		\item[Induction base:]
			If $N=0$, then $\varepsilon(f)=0$.
			Since $\varepsilon$ is injective we have $f=0$, which trivially implies $f\in I:h^\infty$.
		\item[Induction hypothesis:]
			Assume that for any $\tilde{f}\in I^S$, such that $\varepsilon(\tilde{f})$ can be reduced to zero in $N-1$ steps by the left normal form algorithm with respect to $\varepsilon(H)$, we have $\tilde{f}\in I:h^k$.
		\item[Induction step:]
			Let $f\in I^S$ such that the left normal form algorithm needs at least $N$ steps to reduce $\varepsilon(f)$ to zero with respect to $\varepsilon(H)$.
			Then there exists $g\in H$ such that $\lm_{\preceq_2}(\varepsilon(g))\mid\lm_{\preceq_2}(\varepsilon(f))$.
			Let $(i_f,\alpha)=\lexp_{\preceq_2}(\varepsilon(f))$ and $(i_g,\beta)=\lexp_{\preceq_2}(\varepsilon(g))$, then $i_g=i_f$ and
			\[
				t
				:=\varepsilon(f)-\frac{\lc_{\preceq_2}(\varepsilon(f))}{\lc_{\preceq_2}(\varepsilon(y^{\alpha-\beta}g))}\rho(y^{\alpha-\beta})\varepsilon(g)
				\in S\inv A^r
			\]
			can be reduced to zero in $N-1$ steps with respect to $\varepsilon(H)$.
			Since the relations between the variables in $A$ have the form $y_jy_i=c_{ij}y_iy_j+d_{ij}$ for some $c_{ij}\in K\setminus\set{0}$ and $d_{ij}\in A$ such that $\lexp_{\preceq}(d_{ij})\prec\lexp_{\preceq}(y_iy_j)$, we have
			\[
				\lc_{\preceq_2}(\varepsilon(y^{\alpha-\beta}g))
				=u\cdot\lc_{\preceq_2}(\varepsilon(g))
			\]
			for some $u\in K\setminus\set{0}$, which is just the product of all $c_{ij}$ that occur while bringing $\varepsilon(y^{\alpha-\beta}g)$ in standard monomial form, and thus
			\[
				t
				=\varepsilon(f)-\frac{\lc_{\preceq_2}(\varepsilon(f))}{u\lc_{\preceq_2}(\varepsilon(g))}\rho(y^{\alpha-\beta})\varepsilon(g).
			\]
			Since $\lc_{\preceq_2}(\varepsilon(g))$ divides a power of $h$, there exists $\ell\in\IN$ such that
			\[
				c
				:=\frac{h^\ell}{u\lc_{\preceq_2}(\varepsilon(g))}
				\in K[x]\setminus\set{0}
			\]
			and therefore
			\[
				\tilde{f}
				:=h^\ell f-c\lc_{\preceq_2}(\varepsilon(f))y^{\alpha-\beta}g
				\in I^S,
			\]
			since $f\in I^S$ by assumption and $g\in H\subseteq I\subseteq I^S$.
			Now
			\[\begin{split}
				h^\ell t
				&=h^\ell\varepsilon(f)-\frac{h^\ell}{u\lc_{\preceq_2}(\varepsilon(g))}\lc_{\preceq_2}(\varepsilon(f))\rho(y^{\alpha-\beta})\varepsilon(g)\\
				&=h^\ell\varepsilon(f)-c\lc_{\preceq_2}(\varepsilon(f))\rho(y^{\alpha-\beta})\varepsilon(g)\\
				&=\varepsilon(h^\ell f-c\lc_{\preceq_2}(\varepsilon(f))y^{\alpha-\beta}g)\\
				&=\varepsilon(\tilde{f}),
			\end{split}\]
			thus we can apply the induction hypothesis: we have $\tilde{f}\in I^S$ such that $\varepsilon(\tilde{f})=h^\ell t$ can be reduced to zero in $N-1$ steps with respect to $\varepsilon(H)$, since $h^\ell\in S$ is invertible in $S\inv A$ and thus does not change the reducibility of $t$.
			This gives us $\tilde{f}\in I:h^k$ or $h^k\tilde{f}\in I$.
			Now
			\[
				h^{\ell+k}f
				=h^k\tilde{f}+h^kc\lc_{\preceq_2}(\varepsilon(f))y^{\alpha-\beta}g
				\in I
			\]
			implies $f\in I:h^{\ell+k}=I:h^k$, which shows $I^S\subseteq I:h^k$.\qedhere
	\end{description}
\end{proof}

\begin{remark}\label{rem:Alg10isMoreGeneral}
	Let $R$ be a commutative principal ideal domain, which is a computable ring with the field of fractions $Q$.
	By replacing $K[x]$ with $R$ in \Cref{alg:CentralEssentialRationalClosure}, we observe that the same proof can be applied to the following more general situation: suppose that $A$ is a $G$-algebra over $Q$ which contains $Q[x]$.
	Assume further that $c_{ij}$ and all the coefficients of $d_{ij}$ are in $R$, then we define $A_R$ to be an $R$-algebra subject to the same relations as $A$.

	Consider the algorithm applied for a $G$-algebra $A$ over $Q$, a left submodule $I\subseteq A^r$, an algebra $A_R$ over $R$, and $S = R\setminus\set{0}$.
	We replace $K[x]$ with $R$ and do not need to employ an antiblock ordering.
	After computing a left Gr\"obner basis $H$ of $I$ over the field $Q$ we can assume that no denominators are present in $H$.
	Now the candidate $h\in R\setminus\set{0}$ and the rest of the algorithm is the same.
	Also the proof carries almost verbatim with only one modification: since $c:=\frac{h^l}{u\lc_{\preceq_2}(\varepsilon(g))}\in Q\setminus\set{0}$ is a fraction, while $h,u,\lc_{\preceq_2}(\varepsilon(g))\in R$, we just have to replace $\tilde{f}$ with $\hat{f}:=u\cdot\tilde{f}\in I^S$.
	Of course, the computations of the saturation index and the final left Gr\"obner basis happen over $R$ (which would require a special implementation in comparison to the case of ground fields).
	A very natural application of the described algorithm is for $R=\mathbb{Z}$.
\end{remark}

We implemented \Cref{alg:CentralEssentialRationalClosure} using the computer algebra system \textsc{Singular:Plural} (\cite{Plural}) and used it on problems coming from e.g. $D$-module theory:

\begin{example}
	In $\mathcal{D}_3[s]$, the third Weyl algebra over the field $K=\mathbb{Q}$ with an additional commutative variable $s$, we compute the $K[s]\setminus\set{0}$-closure of the left ideal $L_{1}$ which is generated by the elements of order 1 in the derivatives
	\[
		x\partial_x+y\partial_y-5s,xz\partial_z+y\partial_z-xs,y^2z^2\partial_z+y^3\partial_x+x^3\partial_y-y^2zs-x^2\partial_z.
	\]
	The candidate used for saturation is $25s^2+25s+6=(5s+2)(5s+3)$ and the saturation is reached after one step taking barely any time.  The resulting ideal $L$ is a part of the annihilating ideal $I$ of the special function $((xz+y)(x^4-y^4))^s$. Notably, the factor $5s+2$ is still present among the leading coefficients of generators of $L$. Moreover, $L_{1} \subsetneq L$ shows that $I$ cannot be generated by the elements of order 1 only.
\end{example}

The following establishes a sufficient condition for computing local closures iteratively:

\begin{lemma}
	Let $S_1$ and $S_2$ be left denominator sets in a ring $R$, $M$ a left $R$-module and $P$ a left $R$-submodule of $M$.
	If $S_1S_2=S_2S_1$, then $P^{S_1S_2}=(P^{S_1})^{S_2}$.
\end{lemma}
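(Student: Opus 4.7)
The plan is to apply the identification $P^S = \LSat_S^M(P) = \set{m \in M \mid \exists\, s \in S,\ sm \in P}$ proved earlier in the excerpt, so that the claimed equality becomes a direct set-theoretic comparison of left saturation closures. Before doing anything else, I would observe that the hypothesis $S_1 S_2 = S_2 S_1$ is exactly what is needed for $P^{S_1 S_2}$ to make sense in the first place: it ensures that $S_1 S_2$ is multiplicatively closed, because given a product $(s_1 s_2)(s_1' s_2') = s_1 (s_2 s_1') s_2'$, one rewrites $s_2 s_1' \in S_2 S_1 = S_1 S_2$ as $s_1'' s_2''$ and reassociates; together with $1 \in S_1 \cap S_2$ this yields a multiplicative set, and the left Ore and left reversibility properties for $S_1 S_2$ follow by similar two-step rewrites.

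For the inclusion $(P^{S_1})^{S_2} \subseteq P^{S_1 S_2}$, given $m \in (P^{S_1})^{S_2}$ I would unfold saturation closure twice: there exists $t \in S_2$ with $tm \in P^{S_1}$, and then $s \in S_1$ with $(st)m = s(tm) \in P$. Since $st \in S_1 S_2$, this witnesses $m \in P^{S_1 S_2}$.

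For the reverse inclusion $P^{S_1 S_2} \subseteq (P^{S_1})^{S_2}$, pick $u \in S_1 S_2$ with $um \in P$ and decompose $u = s_1 s_2$ with $s_i \in S_i$. Associativity of the $R$-action on $M$ gives $s_1 (s_2 m) \in P$, which exhibits $s_2 m \in P^{S_1}$ (via the witness $s_1$), and in turn $m \in (P^{S_1})^{S_2}$ (via the witness $s_2$).

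No real obstacle arises: both inclusions reduce, after invoking the $\LSat$-characterization, to one-line manipulations using associativity in $M$ and the definition of the set product $S_1 S_2$. Strictly speaking the commutativity hypothesis $S_1 S_2 = S_2 S_1$ does not enter either inclusion directly; its sole role is the preliminary bookkeeping that makes $P^{S_1 S_2}$ a legitimate object, which is why I would highlight that verification explicitly before proceeding to the two inclusions.
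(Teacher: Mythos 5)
Your proof is correct and follows the paper's route exactly: unfold the $\LSat$-characterization on both sides, use associativity of the module action, and read off the witness $s_1 s_2 \in S_1 S_2$; you also correctly isolate the role of the hypothesis $S_1 S_2 = S_2 S_1$ as purely ensuring that $P^{S_1 S_2}$ is well-defined, with neither inclusion using it directly. One caveat in your preliminary bookkeeping: closure of $S_1 S_2$ under products does not by itself make it a multiplicative set in the paper's sense, since that definition also requires $0 \notin S_1 S_2$, and this can fail in a ring with zero divisors even when $0 \notin S_1$ and $0 \notin S_2$ --- for instance $S_1 = \{1,2,4\}$ and $S_2 = \{1,3\}$ in $\mathbb{Z}/6\mathbb{Z}$ are both denominator sets, yet $2\cdot 3 = 0 \in S_1 S_2$. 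In that degenerate case $(S_1 S_2)^{-1}R$ does not exist, although the identity still holds tautologically if one reads $P^{S_1 S_2}$ as $\LSat_{S_1 S_2}^M(P)$ (both sides become $M$). The paper leaves this point implicit, so your explicit verification is worthwhile, but it should either impose $0 \notin S_1 S_2$ or note that the intended setting (domains such as $G$-algebras) makes it automatic.
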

\begin{proof}
	Since $S_1$ and $S_2$ are subsets of $S_1S_2$, $(P^{S_1})^{S_2}\subseteq P^{S_1S_2}$ immediately follows.
	For the other inclusion, if $m\in P^{S_1S_2}$, then there exist $s_1\in S_1$ and $s_2\in S_2$ such that $s_1s_2m\in P$, thus $m\in(P^{S_1})^{S_2}$.
\end{proof}

\begin{remark}\label{remYZ}
	Let $R$ be a commutative principal ideal domain and $R[x]$ a polynomial ring with field of fractions $Q(x)$.
	Consider a left ideal $L$ in the single Ore extension $Q(x)[\partial;\sigma,\delta]$, then in \cite{YZ16} one finds an algorithm for computing the contraction $Q(x)[\partial; \sigma, \delta] L \cap R[x][\partial; \sigma, \delta]$.
	We recognize the latter as the $R[x]\setminus\{0\}$-closure of $L$. 
	In the general setting, addressed in \Cref{rem:Alg10isMoreGeneral}, we can compute the $R[x]\setminus\set{0}$-closure of a submodule of $A^r$ in two steps: let $S_1=Q[x]\setminus\set{0}$ and $S_2 = R\setminus\set{0}$.
	Then $I^{R[x]\setminus\set{0}} = (I^{S_1})^{S_2}$ holds by the following result.
	The left submodule $I^{S_1}$ can be computed with the modified \Cref{alg:CentralEssentialRationalClosure} as explained in \Cref{rem:Alg10isMoreGeneral}.
\end{remark}

\subsection{Central Weyl closure}
\label{subsectCentralWeylclosure}

Let $K$ be a field and $A_n^K$ the $n$-th Weyl algebra over $K$ in the variables $x=\set{x_1,\ldots,x_n}$ and $\partial=\set{\partial_1,\ldots,\partial_n}$.
In this section, we utilize the central closure algorithm presented above in combination with the famous Weyl closure algorithm from \cite{tsai_thesis} to give an algorithm to compute the $K[x,s]\setminus\set{0}$-closure of a left ideal in the algebra $A_n^K[s]:=A_n^K\tensor_KK[s]$, where $s=\set{s_1,\ldots,s_m}$ is a set of additional commutative indeterminates and $S:=K[s]\setminus\set{0}$.
Lastly, let $\rho:=\rho_{S,A_n^K[s]}$.

\begin{remark}
	Note that the extended ring $A_n^K[s]:=A_n^K\tensor_KK[s]$ is no longer a Weyl algebra, but still a $G$-algebra.
	Localizing $A_n^K[s]$ at $S$ yields $A_n^K(s)$, which is isomorphic to $A_n^{K(s)}$, the $n$-th Weyl algebra over the field $K(s)$.
	The localization map $\rho$ is injective, since $A_n^K[s]$ is a domain.
	
	Moreover, most of computations are done over $K[s]$, though mathematically we work over $K(s)$; retaining more generators with coefficients in $K[s]$ is a classical strategy, while working
	with localizations.
\end{remark}

\begin{definition}
	Let $I$ be a left ideal in $A_n^K[s]$.
	Define
	\[
		G
		:=I^{K[x,s]\setminus\set{0}}
		=\LSat_{K[x,s]\setminus\set{0}}^{A_n^K[s]}(I)
		=\set{r\in A_n^K[s]\mid\exists w\in K[x,s]\setminus\set{0}:wr\in I}
	\]
	and
	\[
		H
		:=\LSat_{K(s)[x]\setminus\set{0}}^{A_n^{K(s)}}((I^S)^e)
		=\set{r\in A_n^{K(s)}\mid\exists w\in K(s)[x]\setminus\set{0}:wr\in(I^S)^e}.\qedhere
	\]
\end{definition}

\begin{lemma}\label{central_Weyl_closure_lemma}
	We have $H=G^e$ with respect to $\rho$.
\end{lemma}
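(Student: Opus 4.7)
The plan is to prove the equality $H = G^e$ by double inclusion. Both sides are left ideals of $A_n^{K(s)}$ (the Weyl closure $H$ by construction, and the extension $G^e$ by definition), and the centrality of $K[s]$ in $A_n^K[s]$ together with the injectivity of $\rho$ will be the main technical tools. Crucially, every element of $A_n^{K(s)}$ can be written as $t\inv \rho(f)$ for some $t\in K[s]\setminus\set{0}$ and $f\in A_n^K[s]$, and the same holds for elements of $K(s)[x]$ relative to $K[x,s]$.

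For the inclusion $G^e\subseteq H$, I would start with $r\in G$ and a witness $w\in K[x,s]\setminus\set{0}$ with $wr\in I$, and then push everything through $\rho$. One obtains $\rho(w)\in K(s)[x]\setminus\set{0}$ (by injectivity of $\rho$) and $\rho(w)\rho(r)=\rho(wr)\in\rho(I)\subseteq(I^S)^e$, which shows $\rho(r)\in H$. Since $H$ is a left ideal of $A_n^{K(s)}$ containing $\rho(G)$, it then contains the left ideal $G^e$ generated by $\rho(G)$.

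For the reverse inclusion $H\subseteq G^e$, given $r\in H$ with witness $w\in K(s)[x]\setminus\set{0}$ such that $wr\in(I^S)^e$, I would clear denominators in three successive layers. First, choose $t_1,t_2\in K[s]\setminus\set{0}$ with $t_1r=\rho(r_1)$ for some $r_1\in A_n^K[s]$ and $t_2w=\rho(w_1)$ for some $w_1\in K[x,s]\setminus\set{0}$; centrality of $K[s]$ gives $\rho(w_1r_1)=(t_2w)(t_1r)=t_1t_2wr\in(I^S)^e$. Second, express $\rho(w_1r_1)$ as an $A_n^{K(s)}$-linear combination of $\rho(I^S)$, clear the resulting $K[s]$-denominators of the coefficients by some $c\in K[s]\setminus\set{0}$, and invoke injectivity of $\rho$ to deduce $cw_1r_1\in I^S$. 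Third, apply the definition of $I^S$ to find $d\in K[s]\setminus\set{0}=S$ with $dcw_1r_1\in I$; since $dcw_1\in K[x,s]\setminus\set{0}$, this places $r_1$ in $G$, hence $\rho(r_1)\in\rho(G)\subseteq G^e$, and finally $r=t_1\inv\rho(r_1)\in G^e$.

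The main obstacle will be the careful bookkeeping of denominators in the second inclusion: one has to cross three different closure/localization barriers (from $H$ back to a witness relation in $A_n^{K(s)}$, from $(I^S)^e$ back to $I^S\subseteq A_n^K[s]$, and from $I^S$ back to $I$), which introduces three distinct central multipliers $t_i$, $c$, and $d$ drawn from $K[s]\setminus\set{0}$. The fact that $K[s]$ is central in both $A_n^K[s]$ and $A_n^{K(s)}$ is precisely what makes these multiplications commute freely with the $x$- and $\partial$-parts of every expression, preserving the required ideal memberships throughout.
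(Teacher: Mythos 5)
Your proof is correct and follows essentially the same strategy as the paper's: double inclusion, with the first direction a straightforward push-forward of the witness through $\rho$, and the second a careful clearing of $K[s]$-denominators using the centrality of $K[s]$ and the injectivity of $\rho$. The paper phrases the reverse inclusion in terms of the pair representation of fractions and the Ore equivalence relation rather than your explicit linear-combination/common-denominator bookkeeping, but the two are mathematically the same computation.
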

\begin{proof}
	Let $r\in G$, then there exists $w\in K[x,s]\setminus\set{0}$ such that $wr\in I$.
	Now
	\[
		\rho(w)\rho(r)
		=\rho(wr)
		\in\rho(I)
		\subseteq\rho(I^S)
		\subseteq(I^S)^e
	\]
	and $\rho(w)\in\rho(K[x,s]\setminus\set{0})\subseteq K(s)[x]\setminus\set{0}$, thus $\rho(r)\in H$ and therefore $\rho(G)\subseteq H$.
	Since $H$ is a left ideal in $A_n^{K(s)}$ it contains $G^e$, the left ideal generated by $\rho(G)$, thus $G^e\subseteq H$.\\
	Now let $(t,r)\in H$, where $t\in S$ and $r\in A_n^K[s]$, then there exists $(q,w)\in K(s)[x]\setminus\set{0}$, where $q\in S$ and $w\in K[x,s]\setminus\set{0}$, such that $(q,w)\cdot(t,r)\in(I^S)^e$.
	Since $w$ and $t$ commute we have $(q,w)\cdot(t,r)=(tq,wr)$.
	Now $(tq,wr)\in(I^S)^e$, therefore there exist $\tilde{t}\in S$ and $p\in I^S$ such that $(tq,wr)=(\tilde{t},p)$.
	This implies the existence of $\bar{t}\in S$ and $\bar{r}\in A_n^K[s]$ such that $\bar{t}tq=\bar{r}\tilde{t}$ and $\bar{t}wr=\bar{r}p\in I^S$, thus there exist $\hat{t}\in S$ satisfying $\hat{t}\bar{t}wr\in I$.
	Since $\hat{t}\bar{t}w\in K[x,s]\setminus\set{0}$ we have $r\in G$, thus $(t,r)=(t,1)\cdot(1,r)=(t,1)\cdot\rho(r)\in G^e$ and therefore $H\subseteq G^e$.
\end{proof}

\begin{algorithm}[H]
	\caption{\textsc{CentralWeylClosure}}
	\label{alg_CentralWeylClosure}
	\KwIn{A left ideal $I$ of $A_n^K[s]$.}
	\KwOut{The $K[x,s]\setminus\set{0}$-closure of $I$.}
	\Begin{
		compute $I^S$ via central closure in $A_n^K[s]$\tcp*{central closure}
		extend $I^S$ from $A_n^K[s]$ to $A_n^{K(s)}$\tcp*{extension}
		compute the $K(s)[x]\setminus\set{0}$-closure $H=\ideal{\rho(h_1),\ldots,\rho(h_k)}$ of $(I^S)^e$ in $A_n^{K(s)}$ via Weyl closure\tcp*{Weyl closure}
		let $F:=\ideal{h_1,\ldots,h_k}\subseteq A_n^K[s]$\tcp*{primitive contraction}
		compute $F^S$ via central closure in $A_n^K[s]$\tcp*{central closure}
		\Return{$F^S$}\;
	}
\end{algorithm}

\begin{figure}[h]
	\[\begin{tikzcd}[column sep=7em,row sep=3em]
		\text{in }A_n^K[s]:
		&[-5em]I
			\arrow[]{r}{}[swap]{\text{central closure}}
			\arrow[bend left=20,dashed]{rrr}{}
		&I^S
			\arrow[]{d}{\text{extension}}
		&F
			\arrow[]{r}{}[swap]{\text{central closure}}
		&G=F^S
		\\
		\text{in }A_n^{K(s)}:
		&&(I^S)^e
			\arrow[]{r}{}[swap]{\text{Weyl closure}}
		&H
			\arrow[]{u}{}[swap]{\text{primitive contraction}}
	\end{tikzcd}\]
	\caption{Idea of \Cref{alg_CentralWeylClosure}.}
\end{figure}

\begin{proposition}
	\Cref{alg_CentralWeylClosure} terminates and is correct.
\end{proposition}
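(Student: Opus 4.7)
My plan is first to dispose of termination and then attack correctness by tracking each intermediate object against the target $G := I^{K[x,s]\setminus\set{0}}$.

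Termination is immediate from the termination of the subroutines used: the two central closure calls terminate by \Cref{alg:CentralEssentialRationalClosure} (applicable since $S = K[s]\setminus\set{0}$ consists of central elements of $A_n^K[s]$); the Weyl closure in step~3 terminates by Tsai's algorithm from \cite{tsai_thesis}; and the extension and primitive contraction are formal operations (the latter just clears $S$-denominators from a finite list of generators, which is possible because $S$ is central in $A_n^K[s]$, so any common multiple commutes through all coefficients).

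For correctness I would invoke \Cref{central_Weyl_closure_lemma} to rewrite the output of step~3 as $H = G^e$. The task then reduces to showing $F^S = G$, where $F$ is the left ideal in $A_n^K[s]$ generated by primitive lifts $h_1,\ldots,h_k$ of chosen generators of $H$ (so that $H = \ideal{\rho(h_1),\ldots,\rho(h_k)}$). For $F \subseteq G$: each $\rho(h_i) \in G^e$ expands as a finite left $A_n^{K(s)}$-combination of elements of $\rho(G)$; collecting a common denominator $t \in S$ and using injectivity of $\rho$ (since $A_n^K[s]$ is a domain) yields $t h_i \in \leftideal{A_n^K[s]}{G} = G$, so $h_i \in G^{\merz{t}} \subseteq G^S$. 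But $G$ is already $S$-closed: $S \subseteq K[x,s]\setminus\set{0}$, so if $s\cdot g \in G$ with $s \in S$, then $ws g \in I$ for some $w \in K[x,s]\setminus\set{0}$, and $ws \in K[x,s]\setminus\set{0}$ shows $g \in G$. Hence $h_i \in G$, $F \subseteq G$, and therefore $F^S \subseteq G^S = G$. For the reverse inclusion $G \subseteq F^S$: any $g \in G$ satisfies $\rho(g) \in H = \ideal{\rho(h_1),\ldots,\rho(h_k)}$, so $\rho(g) = \sum_i \beta_i \rho(h_i)$ for finitely many $\beta_i \in A_n^{K(s)}$; a common $S$-denominator $s$ of the $\beta_i$ clears them into $A_n^K[s]$, giving $\rho(s g) = \rho\bigl(\sum_i (s\beta_i) h_i\bigr)$, hence $s g \in F$ by injectivity of $\rho$, i.e.\ $g \in F^S$.

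The main technical subtlety I expect is the clearing-of-denominators across the localization $\rho$: it goes through smoothly only because $S$ is \emph{central} in $A_n^K[s]$ (so $S$-elements pass freely through coefficients and generators on either side), and because $\rho$ is injective (so equalities pulled back from $A_n^{K(s)}$ remain valid in $A_n^K[s]$). Everything else is a direct unwinding of the definitions of $G$, $H$, and the $S$-closure operator combined with \Cref{central_Weyl_closure_lemma}.
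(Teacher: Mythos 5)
Your proof is correct, and the backbone matches the paper's: termination is delegated to the subroutines, \Cref{central_Weyl_closure_lemma} gives $H=G^{e}$, and the remaining work is to show $F^{S}=G$, where $G$ is the target closure. Your argument for $G\subseteq F^{S}$ (clear a common $S$-denominator, use injectivity of $\rho$) is exactly the paper's.

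Where you diverge is the other inclusion. The paper establishes $F^{S}\subseteq G$ by taking $y\in F^{S}$ with $ty\in F$, pushing $\rho(ty)$ into $H$ via $F^{e}\subseteq H$, and then invoking the fact that $H$ is $K(s)[x]\setminus\set{0}$-saturated (a structural property coming from the Weyl closure subroutine) together with $(G^{e})^{c}=G^{S}=G$. You instead prove the stronger $F\subseteq G$ directly: each $\rho(h_{i})\in G^{e}$ is a left $A_{n}^{K(s)}$-combination of $\rho(G)$, clearing a common $S$-denominator $t$ and pulling back by injectivity gives $th_{i}\in G$, and $S$-saturation of $G$ then gives $h_{i}\in G$, hence $F\subseteq G$ and $F^{S}\subseteq G^{S}=G$. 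This route sidesteps the $K(s)[x]\setminus\set{0}$-saturation of $H$ entirely, relying only on $H=G^{e}$, injectivity of $\rho$, and $S$-saturation of $G$; it is marginally more self-contained (one fewer property of the Weyl closure output is needed), at the modest cost of a second denominator-clearing computation. Both arguments are sound.
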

\begin{proof}
	Termination is obvious.
	Since $A_n^{K(s)}$ is Noetherian, the ideal $H$ is finitely generated: let $H=\ideal{(a_1,h_1),\ldots,(a_k,h_k)}$ for some $a_i\in S$ and $h_i\in A_n^K[s]$.
	Since $(a_i,1)$ is a unit in $A_n^{K(s)}$ for all $i$, we have $H=\ideal{(1,h_1),\ldots,(1,h_k)}=\ideal{\rho(h_1),\ldots,\rho(h_k)}$.
	It remains to show that $G=F^S$.\\
	First, let $g\in G$, then $\rho(g)\in G^e=H$ by \Cref{central_Weyl_closure_lemma}, thus there exist $t_i\in S$ and $r_i\in A_n^K[s]$ such that
	\[
		\rho(g)
		=\sum_{i=1}^{k}(t_i,r_i)\rho(h_i)
		=\sum_{i=1}^{k}(t_i,r_ih_i)
		=(t,\sum_{i=1}^{k}\tilde{r}_ir_ih_i)
	\]
	for some common left denominator $t\in S$ and $\tilde{r}_i\in A_n^K[s]$.
	Now
	\[
		\rho(tg)
		=\rho(t)\rho(g)
		=\rho(t)(t,\sum_{i=1}^{k}\tilde{r}_ir_ih_i)
		=\rho\left(\sum_{i=1}^{k}\tilde{r}_ir_ih_i\right).
	\]
	Since $\rho$ is injective we have $tg=\sum_{i=1}^{k}\tilde{r}_ir_ih_i\in F$, therefore $g\in F^S$ and thus $G\subseteq F^S$.\\
	For the second inclusion let $y\in F^S$, then there exists $t\in S$ such that $ty\in F$.
	Now $\rho(ty)\in F^e\subseteq H$, since $F\subseteq H^c$ implies $F^e\subseteq(H^c)^e=H$.
	The ideal $H$ is left $K(s)[x]\setminus\set{0}$-saturated by construction, thus $\rho(t)\in K(s)[x]\setminus\set{0}$ implies $\rho(y)\in G^e$.
	Then $y\in(G^e)^c=G^S=G$ since $G$ is left $S$-saturated, therefore $F^S\subseteq G$.
\end{proof}

\begin{remark}
\label{remWeylClosureAlgorithms}
Let $I\subseteq A_n^{K}=:D$ be a left ideal.
In \cite{tsai_thesis}, two Weyl closure algorithms have been presented.
The optimized one only works for ideals of finite \emph{holonomic rank} (i.e. those ideals $J$, such
that $\dim_{K(x,s)} S^{-1}D/S^{-1}D J < \infty$ for $S=K[x,s]\setminus\{0\}$) and is based on a $D$-module-theoretic (monoidal) localization algorithm.
It has been implemented in \textsc{Macaulay2} and in \textsc{Singular:Plural}.

The general one, which works for any ideal $I$, is much harder, since it relies on a complicated algorithm to determine monoidal $[f]$-torsion of certain finitely generated $D$-modules associated with $I$.
Notably, the general algorithm has not been implemented in any computer algebra system.
\end{remark}

\subsection{Application: computing \texorpdfstring{$\Ann_{D[s]} f^s$}{AnnD[s]fs} via \textsc{CentralWeylClosure}}
\label{AnnfsViaCentralWeylclosure}

The algorithm above has a nice application: namely, the computation of the annihilator of the special function $f^s:=f_1^{s_1}\cdot\ldots\cdot f_r^{s_r}$ in the algebra $A_n^{K}[s]=D[s]$, where $f_i \in K[x]$.
It is a left ideal, denoted by $\Ann_{D[s]} f^s$.
For simplicity of the presentation, let $r=1$, i.~e. $f=f_1\in K[x]$ and $s_1=s$.
We refer the reader to \cite{ABLMS} for details.
Analytic considerations deliver the following additional information:

\begin{proposition} 
	With notations as above, let $S=K[x,s]\setminus\set{0}\subseteq D[s]$ and $S'=K[x]\setminus\set{0}$.
	\begin{enumerate}[(a)]
	\item
		The left ideal $\Ann_{D[s]} f^s$ is $S$-closed.
	\item
		Both $G_1:=\set{f{\partial_i}-\frac{\partial f}{\partial x_i}s\mid 1\leq i\leq n} $ and $G_2$ (see below) generate localized ideals $S^{-1} \Ann_{D[s]} f^s$ and $S'^{-1} \Ann_{D[s]} f^s$, respectively.
	\end{enumerate}
	Consider the $K[x,s]$-module of syzygies of the tuple $(f, s\frac{\partial f}{\partial x_1},\ldots,s\frac{\partial f}{\partial x_n})$.
	Since it is finitely generated, let $\set{g_1,\ldots,g_t}\subseteq K[x,s]^{n+1}$ be its generating set.
	Then
	\[
		G_2
		=\set{a_0+a_1{\partial_1}+\cdots+a_n{\partial_n}\mid(a_0,a_1,\ldots,a_n)=g_i,1\leq i\leq n}.
	\]
\end{proposition}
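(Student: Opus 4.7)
The plan is to reduce all three claims to the behaviour of $f^s$ inside a single explicit $D[s]$-module. Set
\[
M := K[x,f^{-1}][s]\cdot f^s,
\]
where $x_i$ and $s_j$ act by multiplication and $\partial_i$ acts as the twisted derivation $\partial_i+s(\partial f/\partial x_i)f^{-1}$. A short commutator computation (the twist satisfies the Leibniz rule with respect to $x_j$, and $[\partial_i + s(\partial f/\partial x_i)f^{-1},\partial_j+s(\partial f/\partial x_j)f^{-1}]=0$ by symmetry of second partial derivatives of $f$) confirms that this is a left $D[s]$-module structure. As a $K[x,s]$-module, $M$ is free on $\{f^{-k}f^s\}_{k\geq 0}$, hence torsion-free, so multiplication by any nonzero element of $K[x,s]$ is injective on $M$.

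Claim (a) is then immediate: if $g\in D[s]$ and $w\in K[x,s]\setminus\set{0}$ satisfy $wg\in\Ann_{D[s]}f^s$, then $w\cdot (g\cdot f^s)=0$ in $M$ forces $g\cdot f^s=0$.

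For claim (b), both inclusions $G_1,G_2\subseteq\Ann_{D[s]}f^s$ are direct Leibniz computations in $M$: one obtains $(f\partial_i-(\partial f/\partial x_i)s)\cdot f^s=0$, and for $P=a_0+\sum a_i\partial_i$,
\[
P\cdot f^s = f^{-1}\!\left(a_0 f+s\sum_{i=1}^n a_i\frac{\partial f}{\partial x_i}\right) f^s,
\]
which vanishes — by torsion-freeness — iff $(a_0,\ldots,a_n)$ is a syzygy of $(f,s\partial f/\partial x_1,\ldots,s\partial f/\partial x_n)$. Moreover, each element $f\partial_i-(\partial f/\partial x_i)s$ of $G_1$ corresponds to the Koszul syzygy with $-s(\partial f/\partial x_i)$ in position $0$ and $f$ in position $i$, so $G_1\subseteq\erz{G_2}$ already in $D[s]$. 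For generation in the localizations, note that $f\in S'\subseteq S$ is invertible in both $S^{-1}D[s]$ and $S'^{-1}D[s]$. Hence modulo $\erz{G_1}^e$ we have $\partial_i\equiv f^{-1}s(\partial f/\partial x_i)$, and an induction on total $\partial$-degree reduces any element of $S^{-1}D[s]$ to one in $S^{-1}K[x,s]$; the resulting representative annihilates $f^s$ iff it is zero (by (a) applied to $S^{-1}M$), giving $\erz{G_1}^e=S^{-1}\Ann_{D[s]}f^s$. The same reduction, run in $S'^{-1}D[s]$ and using $G_1\subseteq\erz{G_2}$, yields $\erz{G_2}^e=S'^{-1}\Ann_{D[s]}f^s$.

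The main obstacle I anticipate is formalising the $\partial$-degree induction: one must check that the rewriting terminates, that coefficients stay in the appropriate localised polynomial ring, and that the $G_2$-syzygies, which generate over $K[x,s]$, still generate the syzygy module of the same tuple after base change to $K(x,s)$ (for $S$) and to $K(x)[s]$ (for $S'$). The latter is precisely flat base change along a localisation, so it is routine; the former, as in the Weyl-closure literature, requires careful bookkeeping of the non-commutative rewriting, for which we refer to \cite{ABLMS}.
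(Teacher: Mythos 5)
The paper does not actually prove this proposition: it is stated after the remark ``Analytic considerations deliver the following additional information'' and the reference to \cite{ABLMS} for details, so there is no in-text argument to compare against. Your $D$-module-theoretic proof via the explicit module $M=K[x,f^{-1}][s]\cdot f^s$ with the twisted $\partial_i$-action is the standard route taken in the $\Ann_{D[s]}f^s$ literature, and I believe it is essentially the argument behind the cited reference; the reduction in each localization modulo $G_1$, followed by the Koszul-syzygy observation $G_1\subseteq\erz{G_2}$ to transfer the result to $G_2$, gives all three claims.

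One statement in your write-up is wrong as phrased, though the conclusion you draw from it survives: $M$ is not free as a $K[x,s]$-module on $\set{f^{-k}f^s}_{k\geq 0}$, because these elements satisfy nontrivial $K[x,s]$-relations such as $f\cdot f^{-1}f^s - 1\cdot f^s = 0$ whenever $f$ is non-constant. What you actually need, and what is true, is that $K[x,f^{-1}][s]$ is an integral domain (a localization of a polynomial ring), so multiplication by any nonzero element of $K[x,s]$ on $M$ is injective; that torsion-freeness is exactly what drives (a) and the ``$c\cdot f^s = 0 \Leftrightarrow c=0$'' step in (b). I would also make the last sentence of (b) slightly more explicit: the reduction in $S'^{-1}D[s]$ already shows $\erz{G_1}^e = S'^{-1}\Ann_{D[s]}f^s$, and since $\erz{G_1}\subseteq\erz{G_2}\subseteq\Ann_{D[s]}f^s$, sandwiching gives $\erz{G_2}^e = S'^{-1}\Ann_{D[s]}f^s$. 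Finally, the paragraph on flat base change of the syzygy module is not actually needed for the argument as you organize it (you go through $G_1$ in both cases rather than running the reduction directly against $G_2$), so you could drop it to tighten the proof. Apart from the ``free'' slip, the argument is sound.
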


Note that $G_2$ is generated by vector fields of order one in $D[s]$.
Also, $\Ann_{D[s]} f^s$ is of holonomic rank 1.
Therefore we can at long last provide the following alternative to the algorithm of Brian\c{c}on-Maisonobe (\cite{BM02}):

\begin{algorithm}[H]
	\caption{\textsc{AnnFsViaWeylClosure}}
	\label{alg_AnnFsViaWeylClosure}
	\KwIn{A set $G\subseteq D[s]$ and $f^s$ as above.}
	\KwOut{A generating set of $\Ann_{D[s]}f^s\subseteq D[s]$.}
	\Begin{
		ensure that $G$ generates $(K[x,s]\setminus\set{0})\inv\Ann_{D[s]} f^s$\;
		compute $F:=\textsc{CentralWeylClosure}(G)$\;
		\Return{$F$}\;
	}
\end{algorithm}

\subsection{Central geometric closure}

Consider the setting from \Cref{CERC_setting}, but we are now interested in computing the closure $I^T$, where $T:=K[x]\setminus\mf{p}$ for some prime ideal $\mf{p}$ in $K[x]$.
By construction we have $I\subseteq I^T\subseteq I^S$ and we can characterize when the second inclusion is in fact an equality:

\begin{lemma}
	We have $I^T=I^S$ if and only if $\Ann_T(I^S/I)\neq\emptyset$.
\end{lemma}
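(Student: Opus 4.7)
The plan is to prove the two implications separately, exploiting the centrality of $T$ (since $T \subseteq B \subseteq Z(A)$), the multiplicativity of $T$ (which uses that $\mf{p}$ is prime), and the Noetherianity of $A$.

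For the direction ($\Leftarrow$), I would proceed directly. Suppose $t \in \Ann_T(I^S/I)$, so in particular $t \in T$ and $t \cdot I^S \subseteq I$. For any $m \in I^S$, we then have $tm \in I$ with $t \in T$, which by \Cref{def_LSat} means $m \in \LSat_T^{A^r}(I) = I^T$. Hence $I^S \subseteq I^T$. The reverse inclusion $I^T \subseteq I^S$ is immediate from $T \subseteq S$ (which in turn follows from $\mf{p} \subseteq K[x] \setminus (K[x]\setminus\mf{p})$), so $I^T = I^S$.

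For the direction ($\Rightarrow$), suppose $I^T = I^S$. The key observation is that $A$ is a left Noetherian $G$-algebra, so $A^r$ is a Noetherian left $A$-module, and therefore the quotient $I^S/I$ is finitely generated as a left $A$-module. Pick representatives $m_1,\ldots,m_k \in I^S$ of a generating set of $I^S/I$. Since each $m_i \in I^S = I^T$, there exist $t_i \in T$ with $t_i m_i \in I$. Set $t := t_1\cdot\ldots\cdot t_k$. Since $\mf{p}$ is prime, its complement $T = K[x]\setminus\mf{p}$ is multiplicatively closed, so $t \in T$, and in particular $t \neq 0$.

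It remains to verify that $t \cdot I^S \subseteq I$, for which centrality is essential. Given any $m \in I^S$, write $m = \sum_{j=1}^k a_j m_j + m_0$ with $a_j \in A$ and $m_0 \in I$. Using $t \in B \subseteq Z(A)$ and letting $u_j := \prod_{i\neq j} t_i \in B$, I compute
\[
tm = \sum_{j=1}^k a_j (t m_j) + t m_0 = \sum_{j=1}^k a_j u_j (t_j m_j) + t m_0,
\]
which lies in $I$ because $t_j m_j \in I$ for each $j$ and $m_0 \in I$, and $I$ is a left $A$-submodule. Hence $t \in \Ann_T(I^S/I)$, proving that this set is nonempty.

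The main obstacle is the $(\Rightarrow)$ direction, specifically producing a \emph{single} element of $T$ annihilating the whole quotient; this requires both Noetherianity (to reduce to finitely many generators) and centrality (to pull $t$ through arbitrary coefficients $a_j \in A$). Without $B \subseteq Z(A)$, the product $t$ would not in general satisfy $t a_j = a_j t$, and the argument would break down.
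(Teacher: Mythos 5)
Your proof is correct and follows essentially the same route as the paper: both directions rest on the chain $I\subseteq I^T\subseteq I^S$, and the nontrivial direction $(\Rightarrow)$ uses Noetherianity to reduce to finitely many generators, multiplicativity of $T$ (via primeness of $\mathfrak p$) to form the product $t=t_1\cdots t_k\in T$, and centrality of $T\subseteq B\subseteq Z(A)$ to slide $t$ past the coefficients $a_j$. One small slip: the parenthetical justification of $T\subseteq S$, namely ``$\mathfrak p\subseteq K[x]\setminus(K[x]\setminus\mathfrak p)$'', reduces to $\mathfrak p\subseteq\mathfrak p$ and proves nothing; the correct reason is that $0\in\mathfrak p$, so $K[x]\setminus\mathfrak p\subseteq K[x]\setminus\set{0}$, which gives $T\subseteq S$ after identifying $B$ with $K[x]$.
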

\begin{proof}
	Recall that for an $A$-module $I$ and a subset $P$ of $A$, $\Ann_P(I):=\set{p\in P\mid pI=0}$.
	Let $W\in\set{S,T}$, then $I^W$ is finitely generated by some elements $f_1,\ldots,f_k\in A$ and we have that $\Ann_W(I^W/I)$ is non-trivial: since $f_i\in I^W$ there exist $w_i\in W$ such that $w_if_i\in I$, so $w_1\cdot\ldots\cdot w_k\in\Ann_W(I^w/I)$ due to $W$ being central in $A$.
	If $I^S=I^T$ then $\Ann_T(I^S/I)=\Ann_T(I^T/I)\neq\emptyset$.
	On the other hand, let $t\in\Ann_T(I^S/I)$ and $r\in I^S$, then $tr\in I$ and thus $r\in I^T$, which shows $I^S=I^T$.
\end{proof}

Note that $\Ann_T(I^S/I)\neq\emptyset$ is equivalent to $\Ann_B(I^S/I)\nsubseteq\mf{p}$ (recall, that $B$ is identified with $K[x]$ in our setup from \Cref{CERC_setting}) and the latter can be checked algorithmically, since $I^S$ is computable via \Cref{alg:CentralEssentialRationalClosure}.

Nevertheless there are situations where neither inclusion is strict:

\begin{example}
	Consider $I=\leftideal{A}{x(x-1)\partial}$, where $A$ is the first Weyl algebra in $x$ and $\partial$.
	Then $I^S=\leftideal{A}{\partial}$ and $I^T=\leftideal{A}{x\partial}$, if we choose $\mf{p}=\leftideal{K[x]}{x}$, which leads to $I\subsetneq I^T\subsetneq I^S$.
\end{example}

Further advances towards an algorithm for computing $I^T$ are the subject of ongoing research.

\section{Conclusion}

We have provided several algorithms for solving the intersection problem and for computing local closure in various settings with respect to Ore sets with enough commutativity. 
In particular, it follows that arithmetic within the localization of a commutative polynomial algebra is constructive and can be used also in homomorphic images of such algebras inside non-commutative algebras.

At the end of our paper \cite{HL_ISSAC18}, we posed the following questions: does there exist an algorithm to compute\ldots

\begin{itemize}
	\item
		the closure in the case of geometric localization without invoking primary decomposition?
	\item
		the central geometric closure?
	\item
		the geometric closure in the Weyl algebra tensored with a commutative polynomial ring?
\end{itemize}

As it turns out, the first question has been answered since then in \cite{IK18}, which was published just a few months later than \cite{HL_ISSAC18}.
The main tool used is the \emph{double ideal quotient} $I : (I : P)$, where $P$ is a prime ideal, and its variants.
This opens a perspective towards better versions of algorithms, which rely on primary decomposition; among other, for the computation of the symbolic power of an ideal.

However, the other questions we posed are still open.

\section{Acknowledgements}

The authors are grateful to Thomas Kahle (Magdeburg), Gerhard Pfister (Kaiserslautern), Anne Fr\"uhbis-Kr\"uger (Hannover and Oldenburg), Jorge Mart\'in-Morales (Zaragoza) and Simone Bamberger (Aachen) for fruitful discussions.

The authors have been supported by Project I.12 and Project II.6 respectively of SFB-TRR 195 ``Symbolic Tools in Mathematics and their Applications'' of the German Research Foundation (DFG).

\bibliographystyle{elsarticle-harv}
\bibliography{bibliography}

\begin{thebibliography}{24}
\expandafter\ifx\csname natexlab\endcsname\relax\def\natexlab#1{#1}\fi
\expandafter\ifx\csname url\endcsname\relax
  \def\url#1{\texttt{#1}}\fi
\expandafter\ifx\csname urlprefix\endcsname\relax\def\urlprefix{URL }\fi

\bibitem[{Andres et~al.(2010)Andres, Brickenstein, Levandovskyy,
  Mart\'{i}n-Morales, and Sch{\"o}nemann}]{ABLMS}
Andres, D., Brickenstein, M., Levandovskyy, V., Mart\'{i}n-Morales, J.,
  Sch{\"o}nemann, H., 2010. Constructive {D}-module theory with {SINGULAR}.
  Mathematics in {C}omputer {S}cience 4~(2-3), 359--383.

\bibitem[{Becker and Weispfenning(1993)}]{BW93}
Becker, T., Weispfenning, V., 1993. Gröbner Bases. Vol. 141 of Graduate Texts
  in Mathematics. Springer-Verlag, New York.

\bibitem[{Brian\c{c}on and Maisonobe(2002)}]{BM02}
Brian\c{c}on, J., Maisonobe, P., 2002. Remarques sur l'id\'{e}al de {B}ernstein
  associ\'{e} \`{a} des polyn\^{o}mes. Preprint no. 650, Univ. Nice
  Sophia-Antipolis.

\bibitem[{Bueso et~al.(2003)Bueso, G{\'o}mez-Torrecillas, and Verschoren}]{BGV}
Bueso, J., G{\'o}mez-Torrecillas, J., Verschoren, A., 2003. Algorithmic methods
  in non-commutative algebra. Applications to quantum groups. Kluwer Academic
  Publishers.

\bibitem[{Dao et~al.(2017)Dao, De~Stefani, Grifo, Huneke, and N\'u\~nez
  Betancourt}]{SymbPower17}
Dao, H., De~Stefani, A., Grifo, E., Huneke, C., N\'u\~nez Betancourt, L., 2017.
  Symbolic powers of ideals. Tech. rep.
\newline\urlprefix\url{https://arxiv.org/abs/1708.03010}

\bibitem[{Greuel et~al.(2016)Greuel, Levandovskyy, Motsak, and
  Sch{\"o}nemann}]{Plural}
Greuel, G.-M., Levandovskyy, V., Motsak, O., Sch{\"o}nemann, H., 2016.
  {\textsc{Plural}. A \textsc{Singular} {4-1-0} Subsystem for Computations with
  Non-commutative Polynomial Algebras. Centre for Computer Algebra, TU
  Kaiserslautern}.
\newline\urlprefix\url{http://www.singular.uni-kl.de}

\bibitem[{Greuel and Pfister(2008)}]{GPS08}
Greuel, G.-M., Pfister, G., 2008. A SINGULAR Introduction to Commutative
  Algebra, 2nd Edition. Springer.

\bibitem[{Hoffmann and Levandovskyy(2017{\natexlab{a}})}]{HL_ISSAC17}
Hoffmann, J., Levandovskyy, V., 2017{\natexlab{a}}. A constructive approach to
  arithmetics in {O}re localizations. In: Proc. {ISSAC}'17. ACM Press, pp.
  197--204.

\bibitem[{Hoffmann and Levandovskyy(2017{\natexlab{b}})}]{HL17ext}
Hoffmann, J., Levandovskyy, V., 2017{\natexlab{b}}. Constructive arithmetics in
  {O}re localizations of domains. ArXiv e-prints.

\bibitem[{Hoffmann and Levandovskyy(2018)}]{HL_ISSAC18}
Hoffmann, J., Levandovskyy, V., 2018. Constructive arithmetics in {O}re
  localizations with enough commutativity. In: Proc. {ISSAC}'18. ACM Press, pp.
  207--214.

\bibitem[{Ishihara and Yokoyama(2018)}]{IK18}
Ishihara, Y., Yokoyama, K., 2018. Effective localization using double ideal
  quotient and its implementation. In: Proc. {CASC} (Computer algebra in
  scientific computing) 2018. Cham: Springer, pp. 272--287.
\newline\urlprefix\url{https://doi.org/10.1007/978-3-319-99639-4_19}

\bibitem[{Kandri-Rody and Weispfenning(1990)}]{KW}
Kandri-Rody, A., Weispfenning, V., 1990. Non-commutative {G}r{\"o}bner bases in
  algebras of solvable type. J. Symb. Comp. 9~(1), 1--26.

\bibitem[{Kredel(1993)}]{Kr}
Kredel, H., 1993. Solvable polynomial rings. Shaker.

\bibitem[{Kredel(2015)}]{Kredel2015}
Kredel, H., 2015. Parametric solvable polynomial rings and applications. In:
  Gerdt, V.~P., Koepf, W., Seiler, W.~M., Vorozhtsov, E.~V. (Eds.), Proc. CASC
  (Computer algebra in scientific computing) 2015. Cham: Springer, pp.
  275--291.
\newline\urlprefix\url{http://dx.doi.org/10.1007/978-3-319-24021-3_21}

\bibitem[{Kreuzer and Robbiano(2005)}]{KR05}
Kreuzer, M., Robbiano, L., 2005. Computational commutative algebra 2. Springer
  Berlin.

\bibitem[{Levandovskyy(2005)}]{lev_diss}
Levandovskyy, V., 2005. Non-commutative computer algebra for polynomial
  algebras: Gr\"obner bases, applications and implementation. Dissertation,
  Universit\"at Kaiserslautern.
\newline\urlprefix\url{http://kluedo.ub.uni-kl.de/volltexte/2005/1883/}

\bibitem[{Levandovskyy(2006)}]{LVint}
Levandovskyy, V., 2006. Intersection of ideals with non-commutative
  subalgebras. In: Dumas, J.-G. (Ed.), Proc. ISSAC'06. ACM Press, pp. 212--219.

\bibitem[{Levandovskyy and Sch{\"o}nemann(2003)}]{LS}
Levandovskyy, V., Sch{\"o}nemann, H., 2003. Plural - a computer algebra system
  for noncommutative polynomial algebras. In: Proc. ISSAC'03. ACM Press, pp.
  176--183.

\bibitem[{Miller(2016)}]{EM16}
Miller, E., 2016. Finding all monomials in a polynomial ideal. Tech. rep.
\newline\urlprefix\url{https://arxiv.org/abs/1605.08791}

\bibitem[{Posur(2018)}]{Posur18}
Posur, S., 2018. Linear systems over localizations of rings. Archiv der
  Mathematik 111~(1), 23--32.
\newline\urlprefix\url{https://doi.org/10.1007/s00013-018-1183-z}

\bibitem[{Saito et~al.(2000)Saito, Sturmfels, and Takayama}]{SST00}
Saito, M., Sturmfels, B., Takayama, N., 2000. Gr\"obner deformations of
  hypergeometric differential equations. Vol.~6 of Algorithms and Computation
  in Mathematics. Springer-Verlag, Berlin.

\bibitem[{Tsai(2000)}]{tsai_thesis}
Tsai, H., 2000. Algorithms for algebraic analysis. Phd thesis, University of
  California at Berkeley.

\bibitem[{\v{S}koda(2006)}]{skoda_2006}
\v{S}koda, Z., 2006. Noncommutative localization in noncommutative geometry.
  London Mathematical Society Lecture Note Series. Cambridge University Press,
  p. 220–310.
\newline\urlprefix\url{http://arxiv.org/abs/math/0403276}

\bibitem[{Zhang(2016)}]{YZ16}
Zhang, Y., 2016. Contraction of {O}re ideals with applications. In: Proc.
  ISSAC'16. ACM, New York, NY, USA, pp. 413--420.

\end{thebibliography}

\end{document}